\newdimen\plusheight
\def\+{\;\lower\plusheight\hbox{$+$}\;}
\newdimen\minusheight
\def\-{\;\lower\minusheight\hbox{$-$}\;}
\newdimen\cdotsheight
\def\cds{\lower\cdotsheight\hbox{$\cdots$}}
\newtheorem{conjecture}{Conjecture}
\theoremstyle{definition}
\theoremstyle{definition}
\numberwithin{equation}{section}
\theoremstyle{plain}
\newtheorem{theorem}{Theorem}[section]
\newtheorem{corollary}[theorem]{Corollary}
\newtheorem{proposition}[theorem]{Proposition}
\newtheorem{lemma}[theorem]{Lemma}
\titleformat*{\section}{\LARGE\bfseries}
\renewcommand\section{\@startsection{section}{1}{\z@}%
                                  {-3.5ex \@plus -1ex \@minus -.2ex}%
                                  {2.3ex \@plus.2ex}%
                                  {\normalfont\large\bfseries}}
\begin{document}
\title{Hyperbolic $3$-manifolds of bounded volume and trace field degree II}
\author{BoGwang Jeon}
\maketitle
\begin{abstract}
In this paper, we prove the Bounded Height Conjecture which the author formulated in \cite{jeon}. As a corollary, it follows that there are only a finite number of hyperbolic $3$-manifolds of bounded volume and trace field degree. 
\end{abstract}

\section{Introduction} \label{Int}
In \cite{jeon}, the author formulated the following conjecture as a way to prove the conjecture that there are only a finite number of hyperbolic $3$-manifolds of bounded volume and trace field degree.

\begin{conjecture} \label{CBHC}
\emph{(Bounded Height Conjecture in Geometric Topology)} 
Let $M$ be a $k$-cusped hyperbolic $3$-manifold. Then the height of any Dehn filling point of $M$ is uniformly bounded. 
\end{conjecture}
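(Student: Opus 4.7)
The plan is to translate the conjecture into an unlikely-intersection problem on an algebraic torus and then invoke Habegger's Bounded Height Theorem. Recall from \cite{jeon} that for a $k$-cusped hyperbolic $3$-manifold $M$, the Dehn filling points live on the $k$-dimensional deformation variety $D(M) \subset (\mathbb{G}_m)^{2k}$ parametrised by the meridian and longitude holonomies $(m_i,\ell_i)$ at the $k$ cusps, and a Dehn filling with coprime slopes $(p_i,q_i)_{i=1}^{k}$ corresponds to imposing the $k$ multiplicative relations $m_i^{p_i}\ell_i^{q_i}=1$. Every Dehn filling point is therefore an isolated point of the intersection of $D(M)$ with a codimension-$k$ algebraic subgroup $H_{(p,q)} \subset (\mathbb{G}_m)^{2k}$.

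In this language the conjecture asserts that $\bigcup_{(p,q)} D(M) \cap H_{(p,q)}$ has uniformly bounded Weil height as $(p,q)$ ranges over admissible slopes. This is precisely the conclusion of Habegger's Bounded Height Theorem for subvarieties of complementary codimension in $(\mathbb{G}_m)^{n}$, applied to $D(M)$, provided one verifies (i) that $D(M)$ is not contained in any proper coset of an algebraic subgroup, and (ii) that the anomalous locus of $D(M)$ in the sense of Bombieri--Masser--Zannier is controlled. Step (i) I would handle by combining the non-triviality of the $A$-polynomial at each cusp with an argument, using the structure of Thurston's gluing equations, that the holonomies at distinct cusps satisfy no global multiplicative relation, so no proper coset can contain $D(M)$.

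The main obstacle, I expect, is (ii). Habegger's theorem gives bounded height only on $D(M)^{oa}$, the complement of the anomalous subvarieties, so one must show that all but finitely many Dehn filling points lie in $D(M)^{oa}$; equivalently, that the filling points do not accumulate on a positive-dimensional subvariety of $D(M)$ that itself lies in an unexpectedly small coset. My plan is to reduce this to a topological/arithmetic constraint on $M$, using Thurston's hyperbolic Dehn surgery theorem to ensure that for all but finitely many slopes the filling point lies on the geometric component of $D(M)$, and then classifying the anomalous subvarieties of that component directly via the gluing equations. A secondary difficulty is that the slopes $(p_i,q_i)$ are unbounded, so the filling points can escape any fixed compact subset of $D(M)$ in the archimedean topology; however, the strength of Habegger's theorem is precisely that it bounds height uniformly in this regime.

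Once the non-degeneracy of $D(M)$ and the containment of (almost all) Dehn filling points in $D(M)^{oa}$ are in place, Habegger's theorem delivers the uniform height bound asserted in Conjecture~\ref{CBHC}. The bound is automatically independent of the slopes, and independent of the trace field because $D(M)$ is defined over a single number field attached to $M$.
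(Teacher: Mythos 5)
Your plan is to apply Habegger's theorem to $D(M)$ as a black box and then argue that all but finitely many Dehn filling points lie in the non-anomalous locus $D(M)^{oa}$. This is not what the paper does, and in fact it runs into exactly the obstruction that motivated this paper. The issue is identified in the introduction: when the two (or more) cusps are strongly geometrically isolated, the holonomy variety $X$ is ``non-simple,'' and for $k=2$ this is \emph{equivalent} to $X^{oa}=\emptyset$. In that situation every point of $X$ is anomalous, so the containment you want in step~(ii) --- Dehn filling points eventually lie in $X^{oa}$ --- is vacuously false. Classifying the anomalous subvarieties via the gluing equations (your fallback) is essentially the program of the previous paper \cite{jeon}, and it only closes under the ``simple'' hypothesis plus an induction through the product structure coming from Theorem~\ref{main3}; that theorem is only available for $k=2$, so the route does not generalize cleanly.

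The paper instead sidesteps the anomalous locus entirely by modifying Habegger's \emph{proof} rather than invoking his \emph{theorem}. The key is Lemma~\ref{special}: using the Neumann--Zagier form $v_i = u_i\,\tau_i(u_1,\dots,u_k)$ with $\tau_i(0,\dots,0)\notin\mathbb{R}$, one shows that for every Dehn-filling-type matrix $\varphi_0$ (block form as in \eqref{matrix}) the differential $d_{\log(c)}(\varphi_0|_{\mathrm{Def}^*(M)})$ is an isomorphism, because the Jacobian is the diagonal matrix $\mathrm{diag}(a_i+b_i\tau_i)$ with $a_i,b_i\in\mathbb{R}$ not both zero and $\tau_i\notin\mathbb{R}$. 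This condition is precisely the negation of case~(i) in Habegger's Lemma~6.3 dichotomy, so one is \emph{always} in case~(ii) --- no matter whether the subvariety is anomalous or not --- and the lower bound $\Delta_Y(\pi\varphi)\geq C_9$ (Lemma~\ref{lemma 9}) holds without removing anything. Two further restrictions do the rest of the work: one only tracks algebraic subgroups of the fixed block form \eqref{Dehn1}, and one only needs the height bound on a small \emph{classical} neighborhood $\Sigma$ of the complete-structure point $c$ (since Dehn filling points accumulate at $c$), which allows the noetherian descent in Lemma~\ref{lemma 12} to terminate with $S_l=\Sigma$. You should re-center your argument on the local non-degeneracy at $c$ coming from the cusp shapes being non-real, rather than on global control of the anomalous locus.
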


In the same paper, the author gave affirmative answers toward it under some restrictions. For instance, the following is one of the main theorems in \cite{jeon}. 
\begin{theorem} \label{MAIN}
Suppose that the above conjecture is true for any $s$-cusped manifolds where $1\leq s\leq k-1$. Let $X$ be the holonomy variety of $k$-cusped hyperbolic $3$-manifold $M$. 
If $X$ is simple, then the above conjecture is true for $M$. 
\end{theorem}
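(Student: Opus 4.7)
The plan is to combine the arithmetic Bounded Height Theorem for subvarieties of algebraic tori with the inductive hypothesis on manifolds with fewer cusps. First, I would realize the holonomy variety $X$ as a $k$-dimensional subvariety of the torus $(\mathbb{C}^*)^{2k}$ via the eigenvalue coordinates $(M_i,L_i)_{i=1}^k$ of the meridian and longitude holonomies. A Dehn filling point satisfies one multiplicative relation $M_i^{p_i}L_i^{q_i}=1$ per cusp, so it lies in the intersection of $X$ with a codimension-$k$ algebraic subgroup of the ambient torus. Because $\dim X$ equals this codimension, the situation is exactly the setting in which the Bombieri--Masser--Zannier/Habegger Bounded Height Theorem applies, giving a uniform height bound on the Dehn filling points that lie in the complement of the anomalous locus $X^{\mathrm{anom}}$.

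Second, I would analyze $X^{\mathrm{anom}}$, the union of positive-dimensional subvarieties of $X$ contained in proper cosets of lower-codimension algebraic subgroups of $(\mathbb{C}^*)^{2k}$. This is where the hypothesis that $X$ is \emph{simple} should be decisive: simplicity should be formulated so that every positive-dimensional component of $X^{\mathrm{anom}}$ arises from filling some nonempty proper subset of the cusps of $M$, and hence descends to the holonomy variety of a hyperbolic $3$-manifold with $s<k$ cusps. For each such component, I would apply the inductive hypothesis to the corresponding $s$-cusped manifold and pull back the resulting height bound along the natural coordinate projection to control the original Dehn filling points. Combining this with the bound from the first step yields the desired uniform height estimate on all Dehn filling points of $M$.

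The main obstacle will be the second step: translating the algebraic-geometric notion of simplicity into a precise dictionary between each anomalous component of $X$ and a Dehn-filled submanifold of $M$. One has to ensure that the cosets responsible for anomalous behavior are exactly those cut out by the filling relations $M_i^{p_i}L_i^{q_i}=\zeta$ for roots of unity $\zeta$, and that the quotient or image really is the holonomy variety of a lower-cusped hyperbolic manifold to which the inductive hypothesis applies, rather than some exotic algebraic subvariety that slips outside the geometric framework. Once this identification is established, the arithmetic bound on the open part and the inductive bound on the anomalous part fit together to complete the argument.
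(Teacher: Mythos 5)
Your high-level outline---put $X$ in an algebraic torus via the $(M_i,L_i)$ coordinates, use Habegger's Bounded Height Theorem on $X^{oa}$, and dispose of the anomalous locus using the inductive hypothesis---is in the right spirit, but the present paper only \emph{cites} Theorem~\ref{MAIN} from \cite{jeon}; the proof is not reproduced here, only sketched in the introduction, and that sketch differs from your second step in an essential way.

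The gap is in how you handle the anomalous locus and in what ``simple'' is taken to mean. According to the paper's own description, simplicity of $X$ is the assumption that each stratum in the chain $X$, $X\setminus X^{oa}$, $(X\setminus X^{oa})\setminus (X\setminus X^{oa})^{oa}$, and so on, contains only \emph{finitely many} anomalous subvarieties; the conclusion is then reached by applying Habegger's theorem repeatedly, a finite number of times, to each of these strata and to their anomalous subvarieties. This is an iterated descent, not the one-shot dichotomy you propose. Your reading of simplicity---that every positive-dimensional anomalous component corresponds to filling a nonempty proper subset of cusps and hence hands off directly to a lower-cusped manifold---is not what the paper says, and in fact the paper invokes the ``product of two less-cusped manifolds'' reduction precisely in the \emph{non-simple} case (strongly geometrically isolated cusps), where $X^{oa}=\emptyset$ and Habegger's theorem is of no direct use. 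In the simple case the obstruction you must overcome is that a positive-dimensional anomalous component $Y\subset X$ is itself a variety with its own anomalous locus $Y^{\mathrm{anom}}$, which has to be peeled off by another round of Habegger, and so on; simplicity is exactly what keeps this recursion finite. Without that iteration, the height bound on $X^{oa}$ plus an appeal to the inductive hypothesis on ``the'' anomalous components does not close the argument, because you have no guarantee those components are bounded (or even finitely many) after a single pass. A further, smaller issue: the paper's footnote explains that the argument actually relies on a \emph{generalized} version of Habegger's theorem proved by the author, so invoking the original Bombieri--Masser--Zannier/Habegger statement as a black box may not be sufficient as stated.
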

The proof of the above theorem essentially relies on the following theorem which was proved by P. Habegger in \cite{hab}. \footnote{Actually the author used a slightly generalized version of Habegger\rq{}s theorem, which was proved by the author.}
\begin{theorem}\label{habe}
\emph{(Bounded Height Conjecture in Number Theory=Habegger's theorem)} Let $X \subset (\overline{\mathbb{Q}}^*)^n$ be an irreducible variety over $\mathbb{\overline Q}$. 
Then there is a Zariski open subset $X^{oa}$ of $X$ so that 
the height is bounded in the intersection of $X^{oa}$ with the union of algebraic subgroups of dimension $\leq n-\text{dim } X$.
\end{theorem}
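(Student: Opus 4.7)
The plan is to follow the strategy developed by Bombieri--Masser--Zannier for torsion points and extended by Habegger to general algebraic subgroups. The central concept is that of an \emph{anomalous subvariety} of $X$: a positive-dimensional subvariety $Y \subset X$ contained in a translate of an algebraic subgroup $H$ with $\dim Y > \max(0, \dim X + \dim H - n)$. Bombieri--Masser--Zannier's structure theorem asserts that the union of all maximal anomalous subvarieties of $X$ is a proper Zariski closed subset of $X$, so I would define $X^{\mathrm{oa}}$ to be its complement; the openness of the desired set is then automatic.

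With $X^{\mathrm{oa}}$ in hand, the proof proceeds via the logarithmic embedding $\mathrm{Log}: (\overline{\mathbb{Q}}^*)^n \to \mathbb{R}^n$, under which the Weil height of a point is controlled, up to bounded error, by a suitable norm of its image, and an algebraic subgroup $H$ corresponds to a rational subspace $L_H \subset \mathbb{R}^n$. For a point $P \in X \cap H$ with $\mathrm{codim}\, H \geq \dim X$, I would apply Minkowski's second theorem to the lattice of characters vanishing on $H$ to produce $\dim X$ independent characters $\chi_1, \ldots, \chi_{\dim X}$ of controlled norm satisfying $\chi_i(P) = 1$. These vanishings yield linear relations on $\mathrm{Log}(P)$ which, when combined with a Bogomolov-type positive lower bound on the essential minimum of $X$ outside its anomalous locus, confine $\mathrm{Log}(P)$ to a bounded region and thereby bound the height of $P$.

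The hard part will be making the height bound uniform over all algebraic subgroups $H$ of codimension at least $\dim X$, rather than depending on the particular $H$ chosen. Habegger's crucial insight is that after passing to $X^{\mathrm{oa}}$, a compactness argument on the Grassmannian parametrizing the possible tropical subspaces $L_H$, combined with an induction on $\dim X$ via projection to suitable quotient tori, lets the constants be chosen independently of $H$. The main technical obstacles are (i) establishing an effective positive lower bound on the essential minimum of $X$ in the complement of its anomalous locus, and (ii) controlling the tropical projections delicately enough that the inductive step does not reintroduce anomalous behaviour already removed in $X^{\mathrm{oa}}$.
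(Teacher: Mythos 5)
The paper does not give a proof of Theorem~\ref{habe}; it is quoted as Habegger's theorem and cited directly from \cite{hab}. What the paper proves is the modified statement Theorem~\ref{BHC1}, whose proof in Section~\ref{proof} reuses Habegger's machinery (Lemmas \ref{lemma 3}--\ref{lemma 11}) but exploits the extra smoothness and cusp-shape structure of the holonomy variety to drop the need for $X^{\text{oa}}$ altogether. Your sketch is therefore a blind reconstruction of Habegger's own argument rather than of anything proved in this paper, and while it captures the overall shape---the tropical/logarithmic picture, Minkowski reduction of the character lattice cutting out $H$, a height lower bound away from anomalous loci, and a compactness argument---it has one substantive error and one material deviation from what Habegger actually does.

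First, the Bombieri--Masser--Zannier structure theorem says that the union of maximal anomalous subvarieties is Zariski \emph{closed}, not that it is \emph{proper}: $X^{\text{oa}}$ may be empty, and the present paper is in fact motivated by exactly that degeneration (see the discussion surrounding Theorem~\ref{main3}, where non-simplicity of the holonomy variety forces $X^{\text{oa}}=\emptyset$). Your claim that the complement is ``automatically'' a nonempty open set would, if true, render much of this paper unnecessary. Second, Habegger's crucial lower bound on heights does not come from a Bogomolov-type bound on the essential minimum of $X$; it comes from an arithmetic B\'ezout-type estimate on the degree of the image of $Y$ under a monomial map $\varphi$, packaged in the invariant $\Delta_Y(\varphi)$ (cf.\ Proposition~\ref{proposition 1} and Lemmas~\ref{lemma 8}--\ref{lemma 9} above, which are imported from \cite{hab}). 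The positivity of $\Delta_Y$ on a neighborhood in the space of matrices is the engine of the argument, and the dichotomy of Habegger's Lemma~6.3 (anomalous versus $\Delta_Y$ bounded below) is where openness of $X^{\text{oa}}$ is actually used. Your essential-minimum route is a genuinely different (and, as far as I know, not established) mechanism, so the ``hard part'' you defer would not resolve along the lines suggested.
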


In fact, Habegger's theorem already tells us a lot about the uniform boundedness of heights of most Dehn filling points unless $X^{oa}\neq \emptyset$. In Theorem \ref{MAIN}, the holonomy variety $X$ being \lq\lq{}simple\rq\rq{} is an ideal assumption on $X$ so that each subvariety $X$, $(X\backslash X^{oa})$ and $(X\backslash X^{oa})\big\backslash(X\backslash X^{oa})^{oa}$ (and so on) contains only a finite number of anomalous subvarieties. 
As a result, we prove the conjecture by applying Habegger\rq{}s theorem repeatedly, a finite number of times, to each of them and their anomalous subvarieties.  

Although the holonomy variety being \lq\lq{}simple\rq\rq{} is defined from a purely algebro-geometrical (or number-theoretical) viewpoint, interestingly enough, it turns out that this definition gives a very nice structure from the hyperbolic geometric side as well, as the following theorem shows:


\begin{theorem}\cite{jeon}\label{main3}
\footnote{The author proved an analogous version of Theorem \ref{main3} for any $k$-cusped manifold ($k\geq 2$) in his unpublished work.}  
Let $M$ be a $2$-cusped hyperbolic $3$-manifold with rationally independent cusp shapes. 
If the holonomy variety of $X$ is not simple, then the two cusps of $M$ are strongly geometrically isolated.  
\end{theorem}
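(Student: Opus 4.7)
The strategy is to extract a subgroup-theoretic structure from non-simplicity and decouple the two cusps using the rational-independence hypothesis. Let $X \subset (\mathbb{C}^*)^4$ with coordinates $(m_1, \ell_1, m_2, \ell_2)$ be the holonomy variety, and introduce logarithmic coordinates $u_i = \log m_i$, $v_i = \log \ell_i$ near the complete structure $(u, v) = 0$. Then $(u_1, u_2)$ parameterises $X$ locally with $v_i = \tau_i u_i + O(|u|^2)$, and $T_0 X = \mathrm{span}_{\mathbb{C}}\{(1, \tau_1, 0, 0),\ (0, 0, 1, \tau_2)\}$. In these coordinates, strong geometric isolation of the two cusps amounts to $X$ being, near $(u,v) = 0$, an analytic product $C_1 \times C_2$ with $C_i$ a curve in $(m_i, \ell_i)$.

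The first step is to unpack non-simplicity. By Theorem~\ref{habe} together with the Bombieri--Masser--Zannier structure theorem for anomalous subvarieties, a variety failing simplicity must harbour a positive-dimensional algebraic family of maximal anomalous subvarieties at some level of the descending chain $X \supset X \setminus X^{oa} \supset \cdots$; such a family consists of cosets of a common proper connected algebraic subgroup $H \subsetneq (\mathbb{C}^*)^4$ and traces out an $H$-invariant subvariety $Z \subset X$. In particular the Lie algebra $\mathfrak{h} \subset \mathbb{C}^4$ is a nonzero $\mathbb{Q}$-rational subspace whose intersection with $T_p X$ is positive-dimensional at a generic point $p \in Z$.

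The second, key step is decoupling. A nonzero vector $c_1(1, \tau_1, 0, 0) + c_2(0, 0, 1, \tau_2)$ lying in a $\mathbb{Q}$-rational subspace of $\mathbb{C}^4$ forces, via the vanishing of the $3 \times 3$ minors of the augmented matrix built from a rational basis of $\mathfrak{h}$, a polynomial relation in $\tau_1, \tau_2$ with rational coefficients to hold. The $\mathbb{Q}$-linear independence of $\{1, \tau_1, \tau_2\}$ restricts this relation to pure form, so that $\mathfrak{h}$ splits as $\mathfrak{h}_1 \oplus \mathfrak{h}_2$ with $\mathfrak{h}_i$ supported in the cusp-$i$ coordinates; accordingly $H = H_1 \times H_2$. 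Feeding this splitting back into the iteration on every subvariety that appears in the descending chain, the resulting constraints collapse the holonomy variety near the complete structure into a product $C_1 \times C_2$ in the two pairs of cusp coordinates.

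A product decomposition of this form is the standard algebraic avatar of strong geometric isolation: the core-geodesic complex length at cusp $i$ produced by Dehn filling at cusp $j \neq i$ becomes independent of the filling parameter at cusp $j$. The main obstacle is the decoupling step. Because $T_0 X$ engages both cusp-coordinate directions, a $\mathbb{Q}$-rational plane in $\mathbb{C}^4$ can a priori pick out a tangent direction that mixes the two cusps via a monomial $m_1^a \ell_1^b m_2^c \ell_2^d$ with every exponent nonzero; ruling out such mixed monomials is exactly what the rational-independence hypothesis is for, and carrying the argument beyond leading tangent-space order to handle all the higher-order terms of the gluing equations is where the real work lies.
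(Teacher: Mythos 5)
The theorem you are proving is not actually proved in this paper; it is imported from the author's earlier paper \cite{jeon} (it carries an explicit citation tag), so there is no in-text argument here to compare against. Evaluating your sketch on its own terms: the starting data you assemble is right --- the paper itself records that for a $2$-cusped manifold ``non-simple'' is equivalent to $X^{oa}=\emptyset$, and the form $T_0X=\mathrm{span}_{\mathbb{C}}\{(1,\tau_1,0,0),(0,0,1,\tau_2)\}$ in logarithmic coordinates follows from property (ii) of Section~\ref{pro}. However, there are genuine gaps.

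First, your linearized decoupling step is more delicate than the single clause you give. When $\mathfrak{h}$ is a $\mathbb{Q}$-rational plane meeting $T_0X$ in a line spanned by a \emph{mixed} vector $(1,\tau_1,\mu,\mu\tau_2)$ with $\mu\neq 0$, eliminating $\mu$ between the two rational defining equations of $\mathfrak{h}^{\perp}$ produces a relation of the form $\alpha+\beta\tau_1+\gamma\tau_2+\delta\tau_1\tau_2=0$ with $\alpha,\beta,\gamma,\delta\in\mathbb{Q}$. Rational independence of $\{1,\tau_1,\tau_2\}$ alone does not kill the $\tau_1\tau_2$ term, so you must either invoke a stronger hypothesis (on $\{1,\tau_1,\tau_2,\tau_1\tau_2\}$) or give an additional argument to exclude the mixed case --- as written, the ``restricts this relation to pure form'' step does not follow. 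You also reason at the complete-structure point $c$, but a maximal anomalous curve through $c$ may be singular there, in which case its Zariski tangent space at $c$ need not be a line contained in $\mathfrak{h}$; the structure theorem gives you generic points of the family, not $c$ itself, and this needs to be bridged. Second, and more fundamentally, the passage from the Lie-algebra splitting $\mathfrak{h}=\mathfrak{h}_1\oplus\mathfrak{h}_2$ to a local analytic product $X\cong C_1\times C_2$ is asserted but not carried out. This is exactly where the content of the theorem lies --- the higher-order structure of the gluing variety, not its tangent space at $c$ --- and you concede this yourself at the end of your sketch. As it stands, the proposal is a plausible account of the tangent-space heuristics underlying the theorem, but it is not a proof.
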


Note that, for the $2$-cusped case, the holonomy variety $X$ being \lq\lq{}non-simple\rq\rq{} is equivalent to $X^{oa}=\emptyset$.

If a hyperbolic $3$-manifold $M$ has strongly geometrically isolated cusps, then its holonomy variety $X$ is always non-simple 
so there's no obvious way to get the desired result from Habegger's theorem. 
However, in this case, it is known that its structure appears as a product of two less cusped manifolds (see Section 4.1 in \cite{jeon}), so we still get uniform boundedness of the heights of Dehn filling points by the induction step.

In some sense, this fact implies that Habegger's theorem is not optimal in our context. Generally, Habegger's theorem deals with an arbitrary variety and all algebraic subgroups, which intersect it, satisfying certain dimension conditions. 
But on the other hand, in our case, we are only interested in a small neighborhood of a specific point of $X$ (i.e. the point corresponding to the complete structure) and a fixed type of algebraic subgroup intersecting it. 
Intuitively, these observations give us some impression that, by loosening conditions of Habbeger's original proof, there might be some possibility to get a refined version of it, which would be more suitable to our context. 

After all, it turns out that, using some nice properties of the holonomy variety, we can get the following modified form of Habegger\rq{}s theorem by slightly modifying its proof: \footnote{See the first two paragraphs of the proof of Theorem 1.3 in \cite{jeon} to see why Theorem \ref{BHC1} implies the affirmative answer to Conjecture \ref{MAIN}.}


\begin{theorem}\label{BHC1}
Let $X$ be the holonomy variety of a $k$-cusped hyperbolic $3$-manifold. Then there exists a small neighborhood (in the sense of classical topology) of the point corresponding to the complete structure so that the height of its intersection point with any algebraic subgroup defined by Dehn filling equations is uniformly bounded. \footnote{One of the major differences between Theorem \ref{BHC1} and Habegger's theorem is that, in the first case, 
we can get uniformly bounded height without even removing any of its anomalous subvarieties.} 
\end{theorem}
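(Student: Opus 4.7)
The plan is to revisit Habegger's proof of Theorem \ref{habe} step by step, exploiting two features special to the present situation in order to avoid excising an anomalous Zariski open locus. First, the statement only asks for height bounds at intersection points that lie inside a preselected classical neighborhood $U$ of the complete structure point $p_0$. Second, the algebraic subgroups involved are not arbitrary codimension-$k$ subgroups of the ambient torus, but those of the particular form $\ell_i^{m_i}\mu_i^{n_i}=1$ cut out by Dehn filling equations, a highly structured one-parameter family indexed by slope vectors $(m,n)$.

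The natural starting point is the Neumann--Zagier description of $X$ near $p_0$. In suitable logarithmic coordinates on the ambient torus, $X$ is complex-analytically the graph of a convergent power series whose linear part is governed by the cusp shape matrix, and the Dehn filling equations become approximately linear relations among these coordinates. The implicit function theorem then guarantees that for $U$ small enough each Dehn filling subgroup meets $X\cap U$ transversally in a single point, which is combinatorially identified with the slope $(m,n)$. This local picture also lets one write the height of the intersection point as a controlled perturbation of a linear functional in $(m,n)$.

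With this in hand, I would trace Habegger's argument, which proceeds via (i) a reduction to a bounded essential minimum statement through a Zhang-type equidistribution, and (ii) a control of the subvarieties along which the essential minimum can degenerate. Step (ii) is precisely what forces the passage to $X^{oa}$ in the original theorem. In our situation, any positive-dimensional subvariety $Y\subseteq X$ through $p_0$ contained in a translate of an algebraic subgroup would, via Neumann--Zagier, be forced to satisfy an exact linear relation in the logarithmic coordinates on $Y$; such a relation encodes a genuine factorization of the local geometry, as in Theorem \ref{main3} and its generalization mentioned in the footnote, so by induction on the number of cusps the contribution of the corresponding Dehn filling intersection to the height splits as a sum of bounded heights on lower-dimensional holonomy varieties.

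The hard part, and what I expect to be the main obstacle, is achieving \emph{uniformity} of the bound across the infinite family $\{H_{m,n}\}$. Habegger's constants a priori depend on the subgroup, whereas Theorem \ref{BHC1} demands a single constant valid for every slope. To surmount this, I would carry out the analysis universally on $U$, so that the slope data $(m,n)$ enters only through quantities intrinsic to the local geometry of $X$ at $p_0$ (the Hessian of the Neumann--Zagier potential, uniform bounds on the coefficients of the defining series, and the like), and then invoke a compactness argument on the closure of $U$ to extract a single constant. Verifying that no hidden dependence on $(m,n)$ leaks through the inductive reduction in the previous paragraph is the most delicate point, and will require carefully tracking how the factorization of anomalous contributions interacts with the norm of the slope vector.
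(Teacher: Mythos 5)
Your proposal diverges from the paper at the decisive step, and the divergence is not cosmetic. You correctly identify the Neumann--Zagier local model as the essential input, but you then propose to handle the anomalous locus by showing that any offending subvariety through $c$ forces an SGI-type factorization, and then running an induction on the number of cusps. That is essentially the strategy of Theorem \ref{MAIN} from the earlier paper, and it carries precisely the difficulty you flag yourself: the constants in the inductive reduction are hard to make uniform in the slope, and the SGI dichotomy (Theorem \ref{main3}) needs extra hypotheses (rationally independent cusp shapes) and was only established for $2$ cusps in print. Theorem \ref{BHC1} is designed to avoid that entire mechanism.

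What the paper actually does is extract a much more elementary consequence of Neumann--Zagier. A Dehn filling subgroup is cut out by $M_i^{p_i}L_i^{q_i}=1$, one equation per cusp, so the associated $k\times(2k+n)$ integer matrix has, in row $i$, possibly nonzero entries only in columns $2i-1$ and $2i$. In the logarithmic chart on $X$ near $c$, where $v_i=u_i\,\tau_i(u)$ with $\tau_i(0)\notin\mathbb{R}$, the Jacobian of the restriction of such a matrix to $\mathrm{Def}^*(M)$ at $\log(c)$ is \emph{diagonal}, with $i$-th entry $a_i+b_i\tau_i$. Since $a_i,b_i$ are real, not both zero, and $\tau_i$ is non-real, every diagonal entry is nonzero, so the differential is an isomorphism (Lemma \ref{special}), and the same persists on any irreducible $Y\subset X$ through $c$ after moving to a nearby smooth point (Lemma \ref{special 1}). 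In Habegger's framework this means one always lands in case (ii) of his Lemma 6.3: the quantity $\Delta_Y(\varphi)$ is uniformly bounded below over the relevant compact family of matrices (Lemma \ref{lemma 9}), and then his box-principle plus height-lower-bound machinery (Lemmas \ref{lemma 3}--\ref{lemma 6}, Proposition \ref{proposition 1}) yields the bound on a dense Zariski open $U\subset Y$ without discarding any anomalous locus. A Noetherian descent (Lemma \ref{lemma 12}) then exhausts a classical neighborhood $\Sigma$ of $c$. As a side note, your description of Habegger's proof as a Zhang-type equidistribution argument is inaccurate; it is a Dirichlet/geometry-of-numbers approximation over matrices combined with degree-based height lower bounds, and it is precisely the parameter $\Delta_Y$ in that machinery that the paper's differential computation controls. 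Without the diagonal-Jacobian observation, your plan does not close, because the inductive/SGI route neither eliminates the anomalous locus nor delivers a slope-uniform constant.
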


By the work in \cite{jeon}, this theorem implies the following corollary:

\begin{corollary}
Let $M$ be a $k$-cusped hyperbolic $3$-manifold. Then there are a finite number of hyperbolic Dehn fillings of $M$ of bounded trace field degree. 
\end{corollary}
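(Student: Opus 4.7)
The plan is to combine Theorem \ref{BHC1} with Thurston's hyperbolic Dehn surgery theorem and a Northcott-type finiteness principle. First, by Thurston's theorem, all but finitely many Dehn filling slopes on $M$ produce hyperbolic manifolds, and the corresponding holonomy points accumulate at the point of the holonomy variety $X$ associated with the complete structure. Thus, for the specific neighborhood $U$ furnished by Theorem \ref{BHC1}, all but finitely many hyperbolic Dehn fillings correspond to points of $X$ lying in $U$, and we may freely ignore the finitely many exceptions.

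Next, for any such filling, its holonomy point arises as the intersection of $X$ with an algebraic subgroup cut out by the Dehn filling equations $p_i u_i + q_i v_i = 2\pi\sqrt{-1}$. By Theorem \ref{BHC1}, the height of every such intersection point in $U$ is bounded by a uniform constant depending only on $M$. To exploit the trace field hypothesis, I would then invoke the standard fact that the coordinates of the holonomy representation of a Dehn filled manifold lie in a controlled finite extension of its trace field (via the invariant trace field and the relationship between cusp holonomies and traces of peripheral elements). Consequently, bounded trace field degree forces the coordinates of the corresponding points on $X$ to lie in number fields of uniformly bounded degree over $\mathbb{Q}$.

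At this stage, the intersection points corresponding to hyperbolic Dehn fillings of bounded trace field degree satisfy both a uniform height bound and a uniform degree bound. Northcott's theorem then asserts that there are only finitely many algebraic points in $(\overline{\mathbb{Q}}^*)^n$ with simultaneously bounded height and bounded degree, so this set is finite. Since distinct hyperbolic Dehn fillings produce distinct points on $X$ (the Dehn filling equations determine the slopes $(p_i,q_i)$ uniquely from the intersection point), the corollary follows.

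The main obstacle I anticipate is the bookkeeping in the second paragraph: carefully translating "bounded trace field degree" into "bounded degree of holonomy coordinates." This is not entirely formal, since the trace field may be a proper subfield of the field generated by the holonomies, and one must use the standard structure theorems (for instance, that the holonomies lie in a quadratic extension of the invariant trace field, or an explicit control via square roots of trace field elements coming from the parabolic generators) to keep the degree bounded in a uniform way. Once this translation is made precise, the rest of the argument is a clean assembly of Thurston's theorem, Theorem \ref{BHC1}, and Northcott's finiteness principle.
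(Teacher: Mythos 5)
Your proposal is correct and recovers essentially the argument the paper relies on: the paper simply cites the work in \cite{jeon}, where the same chain of reasoning appears — Thurston's Dehn surgery theorem to place all but finitely many holonomy points in the relevant neighborhood, the height bound from Theorem \ref{BHC1}, the translation from trace field degree to bounded degree of the coordinates $(M_i,L_i,z_i)$ (via the standard facts that the shape parameters generate the invariant trace field, the invariant trace field has bounded index in the trace field, and the peripheral eigenvalues lie in at most a quadratic extension), then Northcott's theorem together with the observation that distinct fillings yield distinct Dehn filling points. The ``obstacle'' you flag — bounding the degree of the holonomy coordinates in terms of the trace field degree — is exactly the bookkeeping already carried out in \cite{jeon}, so it is not a genuine gap, just a citation to be filled in.
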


By the Jorgensen-Thurston theory (Theorem 2.5 in \cite{jeon}), the above corollary implies the  following:

\begin{corollary}
There are only a finite number of hyperbolic $3$-manifolds of bounded volume and trace field degree. 
\end{corollary}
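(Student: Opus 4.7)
The plan is to derive this corollary as an essentially formal consequence of the immediately preceding corollary together with the Jorgensen-Thurston theory cited in the introduction as Theorem 2.5 of \cite{jeon}. I would first fix constants $V>0$ and $d\geq 1$ and let $\mathcal{F}_{V,d}$ denote the collection of all hyperbolic $3$-manifolds $N$ satisfying $\operatorname{vol}(N)\leq V$ and $[\mathbb{Q}(\operatorname{tr}\,N):\mathbb{Q}]\leq d$; the task is then simply to show that $\mathcal{F}_{V,d}$ is finite.

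The structural input is the Jorgensen-Thurston reduction, which guarantees a finite list $M_1,\ldots,M_n$ of cusped hyperbolic $3$-manifolds, each of volume at most $V$, such that every hyperbolic $3$-manifold (cusped or closed) of volume at most $V$ is either homeomorphic to one of the $M_i$ or obtained from some $M_i$ by hyperbolic Dehn filling along a nonempty collection of its cusps. The first possibility contributes at most $n$ elements to $\mathcal{F}_{V,d}$.

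Next, I would apply the preceding corollary separately to each $M_i$: for every $i$, the set of hyperbolic Dehn fillings of $M_i$ whose trace field has degree at most $d$ is finite. Taking the (finite) union of these finitely many finite sets, together with $\{M_1,\ldots,M_n\}$ itself, yields a finite superset of $\mathcal{F}_{V,d}$, which is exactly what is required.

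The main obstacle — and indeed essentially the only one — is not in this final assembly step but rather in the two inputs it rests on: the Jorgensen-Thurston reduction from bounded volume to Dehn fillings of finitely many cusped parents, and the previous corollary, whose proof in turn depends on Theorem \ref{BHC1}. Once both are granted, the corollary follows by the elementary finite-union argument sketched above, and no additional height-theoretic or hyperbolic-geometric input is needed.
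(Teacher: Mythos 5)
Your argument is correct and is essentially the paper's own (implicit) derivation: the cited Jorgensen--Thurston theorem reduces the bounded-volume collection to Dehn fillings of a finite list of cusped parents, the preceding corollary bounds the fillings of each parent that have trace field degree $\leq d$, and a finite union finishes the proof. One minor correction: the cusped parents $M_i$ need not themselves satisfy $\operatorname{vol}(M_i)\leq V$ (hyperbolic Dehn filling \emph{strictly decreases} volume, so a parent of a closed manifold of volume near $V$ may have volume exceeding $V$), but since only the \emph{finiteness} of the parent list is used in your argument, this does not affect anything.
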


Here\rq{}s outline of the paper. In Section \ref{pro}, we review some important properties of $X$ and prove the key lemmas which will be used in the proof of Theorem \ref{BHC1}. Applying these properties and lemmas to Habegger\rq{}s original proof, we prove Theorem \ref{BHC1} in Section \ref{proof}. In particular, the last section is independent from the rest, and it is readable without any background on hyperbolic geometry. We provide the sufficient information about $X$ from the number theoretical viewpoint. 

\section{Key Lemma} \label{pro} 

We first collect some needed properties of the holonomy variety $X$ of a given $k$-cusped hyperbolic $3$-manifold $M$. Throughout the paper, the holonomy variety $X$ is the one defined by (5.2) in \cite{jeon}. So $X$ is a $k$-dimensional algebraic variety embedded in 
\begin{equation}\label{coordinates}
\mathbb{C}^{2k+n} \big(:=(M_1,L_1,\dots,M_k,L_k,z_1,\dots,z_n)\big) 
\end{equation}
and the point corresponding to the complete structure is a smooth point of $X$.

To simply the notation let $c$ be the point corresponding to the complete structure. By taking the logarithm on each coordinate, we can get a $k$-dimensional complex manifold \big(say $\text{Def}^*(M)$\big) which is biholomorphic to a small neighborhood (in the sense of classical topology) of $c$ in $X$ \big(say $N$\big). Furthermore, by projecting $\text{Def}^*(M)$ onto the first $2k$-coordinates, we get another biholomorphic $k$-dimensional complex manifold \big(say $\text{Def}(M)$\big) which is embedded in $\mathbb{C}^{2k}$. In other words, we have the following compositions of biholomorphic maps:
\begin{gather*}
N \xrightarrow{\text{log}} \text{Def}^*(M)\xrightarrow{\text{Proj}_{2k}}\text{Def}(M)
\end{gather*}
where \lq\lq{}$\text{log}$\rq\rq{} is the $(2k+n)$-fold product of the usual logarithmic map and \lq\lq{}$\text{Proj}_{2k}$\rq\rq{} represents the projection on the first $2k$-coordinates. 

As in \cite{jeon}, let 
\begin{gather*}
  u_i:=\log M_i\\
  v_i:=\log L_i 
\end{gather*}
for each $1\leq i\leq k$. Then the following facts hold:
\\
\\
(i) $\text{Proj}_{2k}\big(\log (c)\big)=(0,\dots0)\in\mathbb{C}^{2k}$\\
(ii) Using $u_i$ ($1\leq i\leq k$) as coordinates, each $v_i$ can be represented as a holomorphic function of $u_1,\dots, u_k$, and is of the following form
\begin{equation*}
v_i=u_i\cdot\tau_i(u_1,\dots,u_k)
\end{equation*}
where $\tau _i(u_1,\dots,u_k)$ is an even function of its arguments with $\tau_i(0,\dots,0)\notin \mathbb{R}$ for each $i$.\\

Using (i) and (ii), we prove the following lemma:
\begin{lemma} \label{special}
Let $\varphi_0$ be a $k\times (2k+n)$ matrix of rank $k$ such that, for each $i^{\text{th}}$ row ($1\leq i \leq k$), all the entries, possibly except for the ones in the $(2i-1)^{\text{th}}$ and $(2i)^{\text{th}}$ columns, are all equal to zero. Let $a_i$ and $b_i$ be the entries of the $(2i-1)^{\text{th}}$ and $(2i)^{\text{th}}$ columns for each $i^{\text{th}}$ row. \footnote{Note that at least one of $a_i$ and $b_i$ is nonzero for each $i$.}(See \eqref{matrix} in Section \ref{proof} for this matrix type.) Then the differential 
\begin{equation*}
d_z(\varphi_0|_{\text{Def}^*(M)}):T_z\text{Def}^*(M)\rightarrow\mathbb{C}^k
\end{equation*}
is isomorphic where $z=\log (c)\in \text{Def}^*(M)$. \footnote{Here we consider $\varphi_0$ as a linear map from $\mathbb{C}^{2k+n}$ to $\mathbb{C}^{k}$ and $\varphi_0|_{\text{Def}^*(M)}$ a holomorphic map between the two $k$-dimensional complex manifolds $\text{Def}^*(M)$ and $\mathbb{C}^{k}$.}   
\end{lemma}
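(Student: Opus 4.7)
The plan is to work in the holomorphic coordinates $(u_1,\dots,u_k)$ on $\text{Def}^*(M)$ supplied by property (ii), compute the Jacobian of $\varphi_0|_{\text{Def}^*(M)}$ at the origin, and observe that it is diagonal with nonvanishing diagonal entries. Since the source and target are both $k$-dimensional, showing invertibility of this Jacobian at $z=\log(c)$ is equivalent to showing that $d_z(\varphi_0|_{\text{Def}^*(M)})$ is an isomorphism.

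First I would parametrize $\text{Def}^*(M)$ by pulling back the coordinates $(u_1,\dots,u_k)$ from $\mathbb{C}^k$ via $\text{Proj}_{2k}$; by (i) the point $z$ corresponds to $u=(0,\dots,0)$, and by (ii) the $v$-coordinates are given by $v_i=u_i\,\tau_i(u_1,\dots,u_k)$. Because each $i^{\text{th}}$ row of $\varphi_0$ is supported in the $(2i-1)^{\text{th}}$ and $(2i)^{\text{th}}$ columns (with entries $a_i$ and $b_i$) and the last $n$ coordinates on $\mathbb{C}^{2k+n}$ do not contribute, the restricted map $\varphi_0|_{\text{Def}^*(M)}$ in the $u$-coordinates becomes
\begin{equation*}
(u_1,\dots,u_k)\;\longmapsto\;\bigl(a_1u_1+b_1u_1\tau_1(u),\ \dots,\ a_ku_k+b_ku_k\tau_k(u)\bigr)=\bigl(u_i(a_i+b_i\tau_i(u))\bigr)_{i=1}^k.
\end{equation*}

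Differentiating the $i^{\text{th}}$ component with respect to $u_j$ gives $\delta_{ij}(a_i+b_i\tau_i(u))+u_i\,b_i\,\partial\tau_i/\partial u_j$; evaluating at $u=0$, the second term vanishes, so the Jacobian at $z$ is the diagonal matrix $\operatorname{diag}(a_1+b_1\tau_1(0),\dots,a_k+b_k\tau_k(0))$.

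It remains to check that each diagonal entry $a_i+b_i\tau_i(0)$ is nonzero. The coefficients $a_i,b_i$ arise from Dehn filling data and hence are real, and by hypothesis at least one of them is nonzero. If $b_i=0$, then $a_i\neq 0$ and the entry equals $a_i\neq 0$; if $b_i\neq 0$, then $a_i+b_i\tau_i(0)=0$ would force $\tau_i(0)=-a_i/b_i\in\mathbb{R}$, contradicting the condition $\tau_i(0)\notin\mathbb{R}$ from (ii). Thus the Jacobian is invertible, and this is the only place in the argument that uses any real subtlety; the rest is a direct unwinding of the normal form for $\text{Def}^*(M)$. I expect this last nonvanishing step to be the crucial (but short) point of the proof, since everything else is bookkeeping in the chosen coordinates.
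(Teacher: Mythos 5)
Your proof is correct and follows essentially the same route as the paper: both pass to the $(u_1,\dots,u_k)$ coordinate chart on $\text{Def}^*(M)$, compute the Jacobian of the restricted map at the origin to get the diagonal matrix $\operatorname{diag}(a_i+b_i\tau_i(0))$, and conclude nonsingularity from $a_i,b_i\in\mathbb{R}$, $\tau_i(0)\notin\mathbb{R}$, and $(a_i,b_i)\neq(0,0)$. Your explicit case split on $b_i=0$ versus $b_i\neq 0$ just spells out what the paper states in one line; there is no substantive difference.
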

\begin{proof}
Since $\text{Def}^*(M)$ is biholomorphic to $(u_1,\dots,u_k)$, it is enough to prove that the rank of the Jacobian of the following linear map\\ 
\begin{equation*}
\varphi'_0:\quad (u_1,\dots,u_k)\longrightarrow(a_1u_1+b_1v_1,\dots,a_ku_k+b_kv_k)
 \end{equation*}\\
is equal to $k$ at $(0,\dots,0)$. The Jacobian of $\varphi'_0$ at $(0,\dots,0)$ is\\
\[ \left( \begin{array}{cccc}\label{mat1}
a_1+b_1\tau_1 & 0 & \cdots & 0 \\
0 &  a_2+b_2\tau_2& \cdots & 0 \\
\vdots & \vdots & \ddots & \vdots \\
0 & 0 & \cdots & a_k+b_k\tau_k \end{array} \right)\]\\ 
where $\tau_i:=\tau_i(0,\dots,0)$. Since $a_i,b_i\in \mathbb{R}, \tau_i\notin\mathbb{R}$, and at least one of $a_i$ and $b_i$ is nonzero, it is a matrix of rank $k$. 
\end{proof}

Since $c$ is a smooth point of $X$, the following lemma easily follows from the above lemma: 

\begin{lemma} \label{special 1}
Let $\varphi_0$ be the same matrix given in Lemma \ref{special} and $Y$ be an irreducible subvariety of $X$ containing $c$. Then there exists a small open neighborhood $N$ (in the sense of classical topology) of some smooth point in $Y$ such that the complex manifold $N'$, which is obtained by taking the logarithm on each coordinate of $N$, satisfies the following: 
\footnote{If $c$ is a smooth point in $Y$ as well, then we can pick $z$ as $\log (c)$. But, if it is singular, then we have to take some other point near $\log(c)$.}
\begin{equation*}
\text{The differential }d_z(\varphi_0|_{N'}):T_z N'\rightarrow\mathbb{C}^k \text{ is injective for some }z\in N'. 
\end{equation*}
\end{lemma}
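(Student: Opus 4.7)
The plan is to deduce Lemma~\ref{special 1} from Lemma~\ref{special} by propagating the isomorphism conclusion from $c$ to a nearby smooth point of $Y$, using two standard ingredients: the openness of the locus where a holomorphic map between equidimensional complex manifolds has invertible differential, and the density of the smooth locus $Y^{\text{sm}}$ inside $Y$.

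First I would extend Lemma~\ref{special} to a classical neighborhood of $\log(c)$ in $\text{Def}^*(M)$. Because $\text{Def}^*(M)$ and $\mathbb{C}^k$ both have dimension $k$, the differential $d_w(\varphi_0|_{\text{Def}^*(M)})$ is an isomorphism precisely when a single holomorphic determinant is nonzero; Lemma~\ref{special} guarantees non-vanishing at $w=\log(c)$, so by continuity the determinant remains nonzero on an open neighborhood $U\subset\text{Def}^*(M)$ of $\log(c)$. Pulling back via the biholomorphism $\log$ yields an open neighborhood $V\subset X$ of $c$, on which $X$ is smooth and with $\log(V)\subset U$.

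Next I would select the desired smooth point of $Y$. Since $Y$ is irreducible, its smooth locus $Y^{\text{sm}}$ is Zariski open and dense in $Y$, hence dense in the classical topology as well. Consequently there exists $c'\in Y^{\text{sm}}\cap V$; when $c$ itself is smooth in $Y$, take $c'=c$. Choose a small classical neighborhood $N$ of $c'$ inside $Y\cap V$; because $c'\in Y^{\text{sm}}$, $N$ is a complex submanifold of $V\subset X$, and therefore $N':=\log(N)$ is a complex submanifold of $\text{Def}^*(M)$ passing through $z:=\log(c')\in U$.

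To conclude, injectivity at $z$ is essentially automatic: as $N'\subset\text{Def}^*(M)$ is an inclusion of complex submanifolds, $T_zN'$ embeds into $T_z\text{Def}^*(M)$, and by construction $d_z(\varphi_0|_{\text{Def}^*(M)})$ is already an isomorphism, so its restriction to the subspace $T_zN'$ is injective. The only subtlety I anticipate is the bookkeeping of neighborhoods---ensuring the smooth point $c'$ can be chosen sufficiently close to $c$ that the isomorphism of Lemma~\ref{special} persists at $\log(c')$---and this is handled cleanly by the openness of the non-vanishing determinant condition combined with the density of $Y^{\text{sm}}$.
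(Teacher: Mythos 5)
Your argument is correct and is exactly what the paper means when it asserts that Lemma~\ref{special 1} ``easily follows'' from Lemma~\ref{special} because $c$ is a smooth point of $X$; the paper itself supplies no explicit proof, so your write-up simply makes the implicit reasoning precise. The three ingredients you assemble --- openness of the nonvanishing-Jacobian locus in $\text{Def}^*(M)$, classical density of $Y^{\text{sm}}$ in the irreducible variety $Y$ so a smooth point $c'$ of $Y$ can be found inside that locus (and inside the smooth locus of $X$, after shrinking), and the tangent-space inclusion $T_{z}N'\subset T_{z}\text{Def}^*(M)$ turning the isomorphism on the big space into injectivity on the subspace --- are all standard and correctly deployed; the footnote about choosing $z=\log(c)$ when $c\in Y^{\text{sm}}$ is also handled by your choice $c'=c$. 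No gap.
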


\section{Proof of Theorem \ref{BHC1}}\label{proof}

In this section, we prove Theorem \ref{BHC1}. Before we start, we introduce some notation which will be frequently used throughout the section. First, we identify $G^{2k+n}_m$ with the non-vanishing coordinates in the affine $(2k+n)$-space $\overline {\mathbb{Q}}^{2k+n}$ or $\mathbb{C}^{2k+n}$ (i.e.~$G^{2k+n}_m=(\overline{\mathbb{Q}}^*)^{2k+n}$ or $(\mathbb{C^*})^{2k+n}$). We will also denote the coordinates of $G^{2k+n}_m$ by 
\begin{equation*}
(M_1,L_1,\dots,M_k,L_k,z_1,\dots,z_n),
\end{equation*}
which is the same as in the previous section.

The crucial facts which enable us to modify Habegger\rq{}s original proof are the following:\\
\\
(i) $c$ is a smooth point of $X$.\\
(ii) $X$ satisfies Lemma \ref{special 1} in Section \ref{pro}.\\
(iii) We are only interested in the intersection of a small neighborhood (in the sense of classical topology) of $c$ and an algebraic subgroup of fixed type (i.e. the one defined by \eqref{Dehn1}) .\\

In Lemma 6.3 of \cite{hab}, there is an important dichotomy where the first case (i.e. Lemma 6.3 (i) in \cite{hab}) 
falls into the case of anomalous subvariety 
and the second case (i.e. Lemma 6.3 (ii) in \cite{hab}) is used to get uniformly bounded height. In our case, thanks to Lemma \ref{special 1} (which is exactly the negation of the first case), we are always in the second case and thus this will allow us to get the desired result without even removing any of its anomalous subvarieties. 

Now we may rewrite Theorem \ref{BHC1} as follows:
\begin{theorem}\label{BHC3}
Let $X$ be a $k$-dimensional variety in $G^{2k+n}_m(\overline{\mathbb{Q}})$ satisfying Lemma \ref{special 1} in Section \ref{pro} for some smooth point $c\in X$. Then there exists a small neighborhood $\Sigma$ (in the sense of classical topology) of $c$ so that the height of its intersection with any algebraic subgroup defined by the following type of equations 
\begin{equation}\label{Dehn1}
\begin{split}
M_1^{p_1}L_1^{q_1}=1\\
\vdots \qquad \\
M_k^{p_k}L_k^{q_k}=1
\end{split}
\end{equation}
is uniformly bounded.
\end{theorem}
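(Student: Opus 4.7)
The plan is to follow Habegger's proof of Theorem \ref{habe} essentially verbatim, replacing the critical dichotomy (Lemma 6.3 of \cite{hab}) by the single case that survives thanks to Lemma \ref{special 1}. First I would identify each algebraic subgroup $H \subset G_m^{2k+n}$ cut out by equations of the form \eqref{Dehn1} as the kernel of the linear map $\varphi_0 : G_m^{2k+n} \to G_m^k$ given by a $k \times (2k+n)$ integer matrix whose $i$-th row has $p_i$ in column $2i-1$, $q_i$ in column $2i$, and zeros elsewhere. Since each Dehn filling slope satisfies $(p_i, q_i) \neq (0,0)$, this is exactly the matrix shape for which Lemma \ref{special 1} applies; in particular it has rank $k$.

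Next I would recall the structure of Habegger's argument. For a point $x \in X \cap H$, Habegger chooses auxiliary projections adapted to $H$, passes to logarithmic coordinates in a neighborhood of $x$, and combines a Vojta-style height inequality with a differential-geometric estimate. The pivotal step is his Lemma 6.3, which distinguishes two cases: either (i) the log image of $X$ is locally contained in the kernel of the differential of $\varphi_0$ at $x$, which forces $x$ to lie in an anomalous subvariety (and triggers the removal of $X \setminus X^{oa}$); or (ii) this differential is injective at some smooth point, in which case a quantitative lower bound on the differential yields the height bound through the standard ``interpolation with respect to a foliation'' argument.

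The whole point of Lemma \ref{special 1} is that case (i) never occurs in our setting: for every matrix $\varphi_0$ of the special type associated with a Dehn filling, the restriction $\varphi_0|_{N'}$ has injective differential at some smooth point $z$ near $\log c$. Hence we are always in case (ii), and Habegger's estimate can be invoked directly without any preliminary excision of an anomalous locus. The output is a bound on the height of $X \cap H$ on a fixed classical neighborhood $\Sigma$ of $c$, valid for every subgroup $H$ of the prescribed type.

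The main obstacle I anticipate is uniformity of the constants over the infinite family of matrices $\varphi_0$ parameterised by $(p_1,q_1,\ldots,p_k,q_k)$. In Habegger's case (ii) analysis, the height bound depends quantitatively on how far the differential $d_z(\varphi_0|_{N'})$ is from being singular. From the proof of Lemma \ref{special}, this quantity is controlled by the determinant
\[
\prod_{i=1}^k (p_i + q_i \tau_i(0)),
\]
where $\tau_i(0) \notin \mathbb{R}$. After normalizing each row by $\sqrt{p_i^2 + q_i^2}$ (as Habegger does), each factor becomes $|p_i + q_i \tau_i(0)|/\sqrt{p_i^2 + q_i^2}$, which is bounded below by a positive constant depending only on $\mathrm{Im}\,\tau_i(0)$, hence only on $X$ and $c$. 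This uniform non-degeneracy, plus the fact that the classical neighborhood $N$ from Lemma \ref{special 1} does not depend on the Dehn filling slopes, is what lets us extract a single neighborhood $\Sigma$ and a single height bound that work simultaneously for all $(p_i, q_i)$; verifying these uniformities step by step inside Habegger's argument will be the most delicate part of the write-up.
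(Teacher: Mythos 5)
Your high-level plan captures the right insight: Lemma \ref{special 1} forces Habegger's Lemma 6.3 dichotomy to collapse into the non-anomalous case, which is exactly the observation the paper exploits, and you are also right that uniformity over the infinite family of slope matrices is the worry that must be discharged (the paper resolves it not directly by the determinant heuristic you sketch, but abstractly via the compact set $\mathcal{K}^*_{k(2k+n)}$ of row-orthonormal special matrices and a finite-subcover argument in Lemma \ref{lemma 9}, combined with the Dirichlet-type approximation Lemmas \ref{lemma 3}--\ref{lemma 6}; your observation is morally the reason this compactness argument gives a positive $\Delta_Y$ bound, but it is not how the uniformity is actually packaged).

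However, there is a genuine gap: you write that the output of Habegger's case (ii) analysis ``is a bound on the height of $X \cap H$ on a fixed classical neighborhood $\Sigma$ of $c$,'' but this is not what the machinery delivers. What you actually obtain (the paper's Lemma \ref{lemma 11}) is a height bound on $U(\overline{\mathbb{Q}}) \cap \mathcal{C}(Dehn^{[k]},\epsilon)$ for a \emph{dense Zariski open} subset $U \subset X$, and there is no reason for a Zariski open set to contain a classical neighborhood of $c$ (the complement may well contain $c$). To conclude the theorem you still need the Noetherian descent (the paper's Lemma \ref{lemma 12} and the final induction): pass to an irreducible component $Y$ of $\overline{\Sigma \setminus U}$ through $c$, reapply the bounded-height lemma to $Y$, iterate, and use the descending chain condition on Zariski closed sets to make the process terminate with $S_l = \Sigma$. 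This descent is also precisely why Lemma \ref{special 1} is formulated for arbitrary irreducible subvarieties $Y \subset X$ containing $c$ (with the differential required injective at \emph{some} smooth point of $Y$, not necessarily at $c$ itself, since $c$ may be singular on $Y$): your write-up only invokes the lemma for $X$ itself, which is not enough to run the induction. Without this step the argument proves a strictly weaker statement than Theorem \ref{BHC3}.
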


Let $Dehn^{[k]}$ denote the set of all algebraic subgroups defined by equations of the form \eqref{Dehn1} in $G^{2k+n}_m$. And let $\text{Mat}^*_{k(2k+n)}$ be the set of all $k\times(2k+n)$ matrices such that, for each $i^{\text{th}}$ row ($1\leq i \leq k$), all the entries, possibly except for the ones in $(2i-1)^{\text{th}}$ and $(2i)^{\text{th}}$ columns, are all equal to zero.  In other words, $\text{Mat}^*_{k(2k+n)}$ is the set of $k\times(2k+n)$ matrices of of the following type:\\
\begin{equation}\label{matrix}
\begin{pmatrix}
a_1 & b_1 &  0     & \cdots & \cdots & \cdots & \cdots & \cdots & \cdots & 0 \\
0     &   0   & a_2  &    b_2   &   0       & \cdots & \cdots & \cdots & \cdots & 0 \\
\vdots & \vdots & \vdots & \vdots &\vdots&\vdots& \vdots & \vdots & \ddots & \vdots\\
0     &   0   & \cdots & \cdots & \cdots & a_k &  b_k & 0 & \cdots & 0
\end{pmatrix}.
\end{equation}\\

The reason for considering only this type of matrix is obvious. Since we are dealing with an algebraic subgroup of fixed form (i.e. the one defined by \eqref{Dehn1}),  we only need a matrix representing its indices, which is clearly contained in $\text{Mat}^*_{k(2k+n)}$. 

Sometimes when we mention $\text{Mat}_{k(2k+n)}$, it simply means the set of all $k\times (2k+n)$ matrices. All other notation \big(such as $\Delta$ or $\mathcal{C}(S,\epsilon)$ for some $S\subset G^{2k+n}_m(\overline{\mathbb{Q}})$\big) is exactly the same as in \cite{hab} unless otherwise stated. 
\begin{lemma} \label{lemma 3}
Let $Q>1$ be a real number and let $\varphi_0\in \emph{Mat}^*_{k(2k+n)}(\mathbb{R})$, there exist $q\in \mathbb{Z}$ and $\varphi\in\emph{Mat}^*_{k(2k+n)}(\mathbb{Z})$ such that 
\begin{equation*}
1\leq q\leq Q \text{ and } |q\varphi_0-\varphi|\leq\frac{\sqrt{k(2k+n)}}{Q^{1/(k(2k+n))}}.
\end{equation*}
\end{lemma}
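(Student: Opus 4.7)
The plan is to treat this as a direct application of Dirichlet's simultaneous Diophantine approximation theorem, noting that the sparsity constraint defining $\text{Mat}^*_{k(2k+n)}$ imposes no real obstruction. The idea is to list the $N := k(2k+n)$ entries of $\varphi_0$ as a single real vector, approximate them all at once, and then reassemble into a matrix whose sparsity pattern automatically agrees with that of $\varphi_0$.

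First, I would recall (or re-derive by the standard pigeonhole argument partitioning $[0,1)^N$ into $\lceil Q^{1/N}\rceil^N$ boxes and applying it to the $\lfloor Q\rfloor+1$ points $(\{q\alpha_1\},\dots,\{q\alpha_N\})$ for $q=0,1,\dots,\lfloor Q\rfloor$) Dirichlet's theorem: for any $(\alpha_1,\dots,\alpha_N)\in\mathbb{R}^N$ and any real $Q>1$, there exist $q\in\mathbb{Z}$ with $1\leq q\leq Q$ and integers $p_1,\dots,p_N$ such that
\[
\max_{1\leq j\leq N}\bigl|q\alpha_j-p_j\bigr|\leq \frac{1}{Q^{1/N}}.
\]
I would then apply this with $N=k(2k+n)$ to the list $(\alpha_1,\dots,\alpha_N)$ obtained by reading off the entries of $\varphi_0$ in some fixed order (say, row by row), producing the corresponding integers $p_1,\dots,p_N$.

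Next, I would define $\varphi$ to be the $k\times(2k+n)$ integer matrix whose entries are these $p_j$'s placed in the same positions as the $\alpha_j$'s. For every index $j$ corresponding to a forced-zero entry of $\varphi_0$ (that is, every position outside the allowed $(2i-1)$-th and $2i$-th columns of the $i$-th row), I would simply take $p_j=0$. This is a legitimate choice, since then $|q\alpha_j-p_j|=0$ already, and it guarantees $\varphi\in\text{Mat}^*_{k(2k+n)}(\mathbb{Z})$, inheriting exactly the sparsity pattern of $\varphi_0$.

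Finally, I would pass from the coordinate-wise sup-norm bound to the Euclidean/Frobenius matrix norm $|\cdot|$ inherited from \cite{hab}: since $q\varphi_0-\varphi$ has at most $N$ nonzero entries, each of absolute value at most $Q^{-1/N}$, one gets
\[
|q\varphi_0-\varphi|\;\leq\;\sqrt{N}\cdot\max_{j}|q\alpha_j-p_j|\;\leq\;\frac{\sqrt{k(2k+n)}}{Q^{1/(k(2k+n))}},
\]
which is precisely the claimed estimate. There is essentially no obstacle here; the lemma is a modest restatement of simultaneous Dirichlet approximation tailored to matrices in $\text{Mat}^*_{k(2k+n)}$, and the only point requiring any attention is the sparsity compatibility observed above, which is immediate.
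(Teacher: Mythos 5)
Your proof is correct and follows essentially the same route as the paper, which simply cites Habegger's Lemma 4.1 (a Dirichlet-type simultaneous approximation statement for real matrices) and tacitly uses the observation you make explicit — that the approximating integer matrix inherits the sparsity pattern of $\varphi_0$, since the forced-zero entries are matched exactly by zeros. One small caveat: the parenthetical pigeonhole sketch does not quite close as written, because $\lceil Q^{1/N}\rceil^N$ can exceed $\lfloor Q\rfloor$ when $Q$ is real; the clean statement with real $Q>1$ and $1\leq q\leq Q$ is most directly obtained via Minkowski's convex body theorem, which is the standard derivation behind the cited Lemma 4.1.
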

\begin{proof}
This follows from Lemma 4.1 in \cite{hab}.
\end{proof}

We define $\mathcal{K}^*_{k(2k+n)}\subset \text{Mat}^*_{k(2k+n)}(\mathbb{R})$ to be the compact set of all matrices whose rows are orthonormal. So all elements of $\mathcal{K}^*_{k(2k+n)}$ have rank $k$. \footnote{By the definition, all the rows of a matrix in $\text{Mat}^*_{k(2k+n)}$ are naturally orthogonal.}
\begin{lemma}\label{lemma 4}
Suppose $W\subset \emph{Mat}^*_{k(2k+n)}(\mathbb{R})$ is an open neighborhood of $\mathcal{K}^*_{k(2k+n)}$. 
Then there is $Q_0\geq 1$ (which may depend on $W$) with the following property. 
For $Q>Q_0$ a real number and $\varphi_0\in \emph{Mat}^*_{k(2k+n)}(\mathbb{R})$ with rank $k$, 
there exist $q\in \mathbb{Z}, \varphi\in \emph{Mat}^*_{k(2k+n)}(\mathbb{Z})$, and $\theta\in \emph{Mat}_k(\mathbb{Q})$ such that
\begin{equation}
1\leq q\leq Q,\;\; \frac{\varphi}{q}\in W,\;\; |q\theta \varphi_0-\varphi|\leq \frac{\sqrt{k(2k+n)}}{Q^{1/(k(2k+n))}}, \text{ and }|\varphi|\leq (k+1)q. 
\end{equation} 
\end{lemma}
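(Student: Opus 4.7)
The plan is to reduce to Lemma \ref{lemma 3} by pre-multiplying $\varphi_0$ with a rational diagonal matrix $\theta$ that places it near the compact set $\mathcal{K}^*_{k(2k+n)}$. The key structural observation is that the rows of any element of $\text{Mat}^*_{k(2k+n)}$ have pairwise disjoint column supports and are therefore automatically orthogonal; the rank-$k$ hypothesis forces the $i$-th row $(a_i,b_i)$ to be nonzero, so $r_i:=\sqrt{a_i^2+b_i^2}>0$ for each $i$. Consequently the real diagonal matrix $D:=\text{diag}(r_1^{-1},\dots,r_k^{-1})$ satisfies $D\varphi_0\in\mathcal{K}^*_{k(2k+n)}$.

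Next I would fix two constants depending only on $W$ (and $k,n$). By compactness of $\mathcal{K}^*_{k(2k+n)}$ and openness of $W$, pick $\epsilon>0$ so that the $\epsilon$-neighborhood of $\mathcal{K}^*_{k(2k+n)}$ is contained in $W$, shrinking $\epsilon$ if necessary to enforce $\epsilon<(k+1)-\sqrt{k}$. Then pick $Q_0$ so that $\sqrt{k(2k+n)}/Q^{1/(k(2k+n))}\leq\min(\epsilon/2,\;k+1-\sqrt{k}-\epsilon/2)$ for every $Q>Q_0$.

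Given any $\varphi_0$, I would then approximate $D$ by $\theta=\text{diag}(\theta_1,\dots,\theta_k)$ with $\theta_i\in\mathbb{Q}$ chosen close enough to $r_i^{-1}$ that $|\theta\varphi_0-D\varphi_0|<\epsilon/2$; density of $\mathbb{Q}$ makes this possible, and the denominators of the $\theta_i$ (which may depend badly on $\varphi_0$) never enter $Q_0$. Because $\theta$ is diagonal, $\theta\varphi_0$ stays in $\text{Mat}^*_{k(2k+n)}(\mathbb{R})$, so Lemma \ref{lemma 3} applied to $\theta\varphi_0$ produces integers $1\leq q\leq Q$ and $\varphi\in\text{Mat}^*_{k(2k+n)}(\mathbb{Z})$ with $|q\theta\varphi_0-\varphi|\leq\sqrt{k(2k+n)}/Q^{1/(k(2k+n))}$.

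The two remaining conclusions then follow by the triangle inequality, using $|D\varphi_0|=\sqrt{k}$ in the Frobenius norm and $q\geq 1$:
\begin{equation*}
|\varphi/q-D\varphi_0|\leq|\varphi/q-\theta\varphi_0|+|\theta\varphi_0-D\varphi_0|<\epsilon/2+\epsilon/2=\epsilon,
\end{equation*}
which puts $\varphi/q$ in $W$; and
\begin{equation*}
|\varphi|\leq|\varphi-q\theta\varphi_0|+q(\sqrt{k}+\epsilon/2)\leq(k+1-\sqrt{k}-\epsilon/2)+q(\sqrt{k}+\epsilon/2)\leq(k+1)q,
\end{equation*}
by the calibration of $Q_0$. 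The main obstacle is confirming that $Q_0$ is independent of $\varphi_0$; this succeeds because $\epsilon$ depends only on $W$, while all of the $\varphi_0$-dependent data is swept into the otherwise unconstrained rational matrix $\theta$.
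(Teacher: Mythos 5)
Your proposal is correct and is precisely the argument the paper gestures at when it says ``Following the proof of Lemma 4.2 in \cite{hab}, \ldots\ we can pick $\varphi$ as an element in $\text{Mat}^*_{k(2k+n)}(\mathbb{Z})$.'' The central structural observation you isolate --- that rows of a matrix in $\text{Mat}^*_{k(2k+n)}$ have disjoint column supports, hence are automatically orthogonal, so Habegger's Gram--Schmidt matrix $\theta$ can be taken diagonal and thus preserves $\text{Mat}^*_{k(2k+n)}$ when applying Lemma \ref{lemma 3} --- is exactly the point that justifies the paper's terse citation.
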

\begin{proof}
Following the proof of Lemma 4.2 in \cite{hab}, we can see that, in our case, we can pick $\varphi$ (which is essentially obtained by the previous lemma) as an element in $\emph{Mat}^*_{k(2k+n)}(\mathbb{Z})$. The rest immediately follows from the same lemma.  
\end{proof}

\begin{lemma}\label{lemma 5}
Let $\varphi:G^{2k+n}_m\rightarrow G^k_m$ and $p\in G^{2k+n}_m(\overline{\mathbb{Q}})$, then
\begin{equation*}
h(\varphi(p))\leq \sqrt{k(2k+n)}|\varphi|h(p).
\end{equation*}
\end{lemma}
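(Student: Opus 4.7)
The plan is to reduce the statement to the standard functoriality properties of the Weil height followed by a single application of the Cauchy--Schwarz inequality, mirroring the analogous bound used in Habegger's original proof. First I would write $\varphi$ as an integer $k \times (2k+n)$ matrix $(a_{ij})$, so that the $i^{\text{th}}$ coordinate of $\varphi(p)$ equals $\prod_j p_j^{a_{ij}}$, where $p=(p_1,\dots,p_{2k+n})$.

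Next, using the elementary inequalities $h(xy)\le h(x)+h(y)$ and $h(x^n)=|n|h(x)$ on $G_m$, I would obtain, coordinatewise,
\[
h\bigl(\varphi(p)_i\bigr)\le \sum_{j=1}^{2k+n}|a_{ij}|\,h(p_j).
\]
Summing over $i$, with the convention that the height on $G^k_m$ is the sum of the heights of the coordinates, and bounding each $h(p_j)$ by $h(p)$, would give
\[
h\bigl(\varphi(p)\bigr)\le h(p)\sum_{i,j}|a_{ij}|.
\]

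Finally, I would invoke Cauchy--Schwarz in the form $\sum_{i,j}|a_{ij}|\le \sqrt{k(2k+n)}\,|\varphi|$, where $|\varphi|$ denotes the Euclidean (Frobenius) norm of the matrix of entries of $\varphi$; this is the same normalization that is implicitly used in Lemma \ref{lemma 3}. Combining this with the previous display would yield the claimed inequality.

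I do not expect any serious obstacle here: the argument is a direct transcription of the standard height estimate for homomorphisms of tori (essentially the content of Lemma 2.2 of \cite{hab}) to our $(2k+n)$-dimensional setting. The only mild point of care is to keep the normalization of the matrix norm $|\varphi|$ and the convention for the height on $G^k_m$ consistent with the rest of the paper, so that the constant comes out to exactly $\sqrt{k(2k+n)}$.
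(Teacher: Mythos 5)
Your proof is correct and is essentially a self-contained reconstruction of the argument that the paper leaves to a citation: the paper's entire proof is ``This follows from Lemma~4.3 in \cite{hab},'' whereas you unpack the content of that cited lemma. The chain $h(\varphi(p))\le\sum_i h(\varphi(p)_i)\le\sum_{i,j}|a_{ij}|\,h(p_j)\le h(p)\sum_{i,j}|a_{ij}|\le\sqrt{k(2k+n)}\,|\varphi|\,h(p)$ is valid, with the last step being Cauchy--Schwarz on the $k(2k+n)$ entries against the constant vector, and your identification of $|\varphi|$ as an $L^2$-type (Frobenius) norm is consistent with the $\sqrt{k(2k+n)}$ that appears in Lemma~\ref{lemma 3}. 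One small imprecision worth flagging: Habegger's height on $G_m^k$ is the projective Weil height of $(1:q_1:\cdots:q_k)$, not literally the sum $\sum_i h(q_i)$; but since the projective height is bounded above by that sum and each coordinate height $h(p_j)$ is bounded above by $h(p)$, every inequality you use goes in the right direction and the constant comes out as claimed. So the argument closes, and it simply makes explicit what the paper invokes by reference.
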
 
\begin{proof}This follows from Lemma 4.3 in \cite{hab}.
\end{proof}

\begin{lemma}\label{lemma 6}
Suppose $W\subset \text{Mat}^*_{k(2k+n)}(\mathbb{R})$ is an open neighborhood of $\mathcal{K}^*_{k(2k+n)}$. 
Let $Q_0$ be the constant from Lemma \ref{lemma 4} and let $Q>Q_0$ be a real number. 
If $p\in Dehn^{[k]}$, then there exist $q\in\mathbb{Z}$ and $\varphi\in \text{Mat}^*_{k(2k+n)}(\mathbb{Z})$ such that
\begin{equation*}
1\leq q\leq Q,\;\; \frac{\varphi}{q}\in W,\;\; h(\varphi(p))\leq \frac{k(2k+n)}{Q^{1/(k(2k+n))}}h(p), \text{ and }|\varphi|\leq(k+1)q. 
\end{equation*} 
\end{lemma}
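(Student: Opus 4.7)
The plan is to adapt Habegger's Lemma 4.4 in \cite{hab} to our constrained setting, feeding the preceding three lemmas (which are restricted to $\text{Mat}^*_{k(2k+n)}$) with the explicit matrix coming from the defining equations of a point $p \in Dehn^{[k]}$.

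First I would unpack the hypothesis on $p$: by the definition of $Dehn^{[k]}$, there exist integer pairs $(p_i, q_i) \neq (0,0)$ (for $i = 1, \dots, k$) such that $M_i^{p_i} L_i^{q_i} = 1$ holds at $p$. Assemble them into the integer matrix $A \in \text{Mat}^*_{k(2k+n)}(\mathbb{Z})$ whose $i$-th row is $(0, \dots, 0, p_i, q_i, 0, \dots, 0)$. The rows of $A$ have disjoint supports, hence are automatically orthogonal, so setting $D := \operatorname{diag}(\|A_i\|_2)$ and $\varphi_0 := D^{-1} A$ gives $\varphi_0 \in \mathcal{K}^*_{k(2k+n)}$. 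In particular $\varphi_0$ has rank $k$, so Lemma \ref{lemma 4} applies and produces $q \in \mathbb{Z}$, $\varphi \in \text{Mat}^*_{k(2k+n)}(\mathbb{Z})$, and $\theta \in \text{Mat}_k(\mathbb{Q})$ satisfying three of the four required conclusions immediately: $1 \leq q \leq Q$, $\varphi/q \in W$, and $|\varphi| \leq (k+1)q$. Only the height inequality for $\varphi(p)$ then remains.

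The key second step is to exploit that $p$ actually lies in the subgroup cut out by $A$. For any place $v$ of a field of definition of $p$, set $L_v(p) := (\log |p_j|_v)_{j} \in \mathbb{R}^{2k+n}$; the identity $M_i^{p_i} L_i^{q_i} = 1$ forces $A \cdot L_v(p) = 0$ entrywise, whence $\varphi_0 \cdot L_v(p) = 0$ and consequently
\begin{equation*}
\varphi \cdot L_v(p) = (\varphi - q\theta\varphi_0) \cdot L_v(p).
\end{equation*}
One can now rerun the computation in the proof of Lemma \ref{lemma 5} (i.e., Lemma 4.3 of \cite{hab}) using the real matrix $\varphi - q\theta\varphi_0$ in place of $\varphi$: that argument only reads off entry-wise sizes before summing over places, so it goes through without modification. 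This yields $h(\varphi(p)) \leq \sqrt{k(2k+n)} \cdot |q\theta\varphi_0 - \varphi| \cdot h(p)$, and combining with the approximation bound $|q\theta\varphi_0 - \varphi| \leq \sqrt{k(2k+n)}/Q^{1/(k(2k+n))}$ from Lemma \ref{lemma 4} yields the desired $h(\varphi(p)) \leq (k(2k+n)/Q^{1/(k(2k+n))}) \cdot h(p)$.

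The substance of the argument is that a point $p \in Dehn^{[k]}$ supplies for free a matrix in $\mathcal{K}^*_{k(2k+n)}$ annihilating $L_v(p)$ at every place, which is exactly the input needed to convert the Dirichlet-type approximation provided by Lemma \ref{lemma 4} into a height bound. The main obstacle is therefore largely bookkeeping: one must re-examine the proof of Lemma \ref{lemma 5} carefully enough to see that it still applies when $\varphi$ is replaced by the non-integer real matrix $\varphi - q\theta\varphi_0$, but this is routine since heights are defined through real-valued place-by-place logarithms.
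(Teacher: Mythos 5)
Your proposal is correct and follows exactly the route the paper intends: it is Habegger's Lemma 4.4 argument, fleshed out, with the one observation the paper singles out — that starting from the row-normalized form $\varphi_0 \in \mathcal{K}^*_{k(2k+n)}$ of the matrix $A$ cutting out the Dehn-filling subgroup containing $p$, Lemma \ref{lemma 4} already returns a $\varphi$ lying in $\text{Mat}^*_{k(2k+n)}(\mathbb{Z})$. The cancellation $\varphi_0 \cdot L_v(p) = 0$ place-by-place and the replacement of $\varphi$ by $\varphi - q\theta\varphi_0$ in the Lemma \ref{lemma 5}-type height estimate is precisely what the cited proof in \cite{hab} does.
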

\begin{proof}
Following the proof of Lemma 4.4 in \cite{hab}, we can pick $\varphi$ as an element of $\text{Mat}^*_{k(2k+n)}(\mathbb{Z})$ in this case as well. The rest immediately follows from the same lemma.
\end{proof}

From now on, $Y(\subset X \subset G^{2k+n}_m)$ denotes an irreducible closed subvariety \lq\lq{}containing $c$\rq\rq{} and having dimension $r\geq 1$. 


\begin{proposition}\label{proposition 1}
Suppose $\varphi:G^{2k+n}_m\rightarrow G^r_m$ is a nontrivial 
homomorphism of algebraic subgroups. There exist a dense Zariski open subset $U\subset Y$ and a constant $C_7$ such that 
\begin{equation}\label{(13)}
h(\varphi(p))\geq\frac{r}{2C_1}|\varphi|\frac{\Delta_Y(\varphi)}{|\varphi|^r}h(p)-C_7
\end{equation} 
for all $p\in U(\overline{\mathbb{Q}})$ where $C_1=(4(2k+n))^r\emph{deg}(Y)$.
\end{proposition}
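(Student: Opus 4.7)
The plan is to follow the proof of the corresponding proposition in Habegger's paper \cite{hab} essentially line by line, in the same spirit in which Lemmas \ref{lemma 3} through \ref{lemma 6} above adapt Lemmas 4.1 through 4.4 of \cite{hab}. The statement here is a general arithmetic inequality on heights of images under homomorphisms of $G^{2k+n}_m$, and does not exploit the specific holonomy-variety structure of $X$; that structure only enters when this proposition is later combined with Lemma \ref{special 1}. The proof rests on three standard tools from arithmetic geometry: Zhang's theorem of successive minima, the arithmetic Bezout theorem, and a projection formula for degrees of images under homomorphisms of algebraic tori.

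Concretely, I would first set $Z := \overline{\varphi(Y)} \subset G^r_m$. If $\varphi|_Y : Y \to Z$ is generically finite, corresponding to $\Delta_Y(\varphi) > 0$, then $\dim Z = r$ and a standard projection formula gives $\deg(Z) \cdot \Delta_Y(\varphi) \leq |\varphi|^r \deg(Y)$; the degenerate case $\Delta_Y(\varphi) = 0$ makes the right-hand side of \eqref{(13)} non-positive and hence trivial after enlarging $C_7$. Applying Zhang's inequality to $Z$ yields $\hat{\mu}(Z) \geq \hat{h}(Z)/((r+1)\deg(Z))$, and by the definition of the essential minimum there is a proper Zariski closed subset of $Z$ off of which any algebraic point has height at least $\hat{\mu}(Z) - \epsilon$. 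Pulling this back along $\varphi|_Y$ carves out a dense Zariski open subset $U \subset Y$ on which $h(\varphi(p)) \geq \hat{\mu}(Z) - \epsilon$.

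Next, the arithmetic Bezout theorem supplies a lower bound of the form $\hat{h}(Z)\,\Delta_Y(\varphi) \geq |\varphi|\,\hat{h}(Y) - O(\deg(Y)\,|\varphi|^r)$. Combining this with the preceding upper bound on $\deg(Z)$ gives $\hat{\mu}(Z) \geq \frac{r}{2C_1}\,|\varphi|\,\frac{\Delta_Y(\varphi)}{|\varphi|^r}\cdot \hat{h}(Y)/\deg(Y)$ once the auxiliary constants are absorbed into $C_1=(4(2k+n))^r\deg(Y)$. Finally, applying Zhang's inequality now to $Y$ itself, together with the fact that $h(p)$ differs from $\hat{h}(Y)/\deg(Y)$ by a bounded amount on a dense Zariski open subset of $Y$, allows one to replace the geometric quantity $\hat{h}(Y)/\deg(Y)$ by the arithmetic quantity $h(p)$, up to an additive error which is absorbed into $C_7$.

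The main obstacle is the careful bookkeeping of the error terms coming from Zhang's inequality and from arithmetic Bezout so that they all fit into the single constant $C_7$, independent of $p \in U(\overline{\mathbb{Q}})$. This accounting is routine but tedious and is already carried out in \cite{hab}. Since the proposition uses no property of $Y$ beyond being an $r$-dimensional irreducible subvariety of $G^{2k+n}_m$ (its containment of $c$ plays no role at this stage), I expect the actual proof to amount to a direct citation of Habegger's argument, noting only the minor notational adjustment that the matrix class $\text{Mat}^*_{k(2k+n)}$ replaces the class of general integer matrices wherever linear-algebraic data intervenes.
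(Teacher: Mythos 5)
Your conclusion is exactly right: the paper's proof of this proposition is a one-line citation of Habegger's Proposition 5.2, since the statement is a general height inequality requiring no special property of $Y$ beyond being an $r$-dimensional irreducible subvariety of $G^{2k+n}_m$. The only small inaccuracy in your write-up is that no restriction to $\text{Mat}^*_{k(2k+n)}$ appears in this proposition --- $\varphi$ is an arbitrary nontrivial homomorphism $G^{2k+n}_m\to G^r_m$ --- so even that notational adjustment is unnecessary.
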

\begin{proof}
This follows from Proposition 5.2 in \cite{hab}.
\end{proof}

\begin{lemma}\label{lemma 8}
Let $N$ be an open neighborhood (in the sense of classical topology) of some smooth point in $Y$ and $N\rq{}$ be the complex manifold obtained by taking the logarithm on each coordinate of $N$. We further assume that $d_{z_0}(\varphi_0|_{N\rq{}})$ is an isomorphism of $\mathbb{C}$-vector spaces for some $\varphi_0\in \emph{Mat}_{r(2k+n)}(\mathbb{C})$ and $z\in N\rq{}$. Then there exist $C_8>0$ and an open neighborhood $W\subset \emph{Mat}_{r(2k+n)}(\mathbb{R})$ of $\varphi_0$ such that 
\begin{equation*}
\Delta_Y(\varphi)\geq C_8
\end{equation*} 
for all $\varphi\in W\cap\emph{Mat}_{r(2k+n)}(\mathbb{Q})$. 
\end{lemma}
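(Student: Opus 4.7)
The plan is to interpret $\Delta_Y(\varphi)$ as a polynomial (in particular, continuous) function of $\varphi$ on $\mathrm{Mat}_{r(2k+n)}(\mathbb{R})$, to use the analytic hypothesis to force $\Delta_Y(\varphi_0)>0$, and to conclude by an elementary openness argument.

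First, I would recall the intersection-theoretic description of $\Delta_Y$ from Habegger's paper. For an integer matrix $\varphi$ of rank $r$ defining a homomorphism $G_m^{2k+n}\to G_m^r$, the quantity $\Delta_Y(\varphi)$ is (up to normalization) the degree of the image $\varphi(Y)$ counted with the generic degree of $\varphi|_Y$, which can be written as an intersection number of the form $\deg_Y\bigl((\varphi^*\omega)^r\bigr)$ for a standard translation-invariant form $\omega$ on $G_m^r$. This formula makes literal sense for any real matrix and is polynomial (in fact, homogeneous of degree $r$) in the entries of $\varphi$. Thus $\Delta_Y$ extends to a continuous function on $\mathrm{Mat}_{r(2k+n)}(\mathbb{R})$ that restricts to the usual $\Delta_Y$ on the rational points.

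Next, I would verify $\Delta_Y(\varphi_0)>0$. Because $N\subset Y$ is open around a smooth point and $N\to N'$ is a biholomorphism (given by the coordinatewise logarithm), the differential of $\varphi_0\big|_{N'}$ at $z_0$ identifies, via $d\log$, with the restriction of the linear map $\varphi_0\colon\mathbb{C}^{2k+n}\to\mathbb{C}^r$ to the tangent space $T_{c}Y\subset\mathbb{C}^{2k+n}$ at the corresponding smooth point $c\in Y$. The assumption that this differential is an isomorphism of $\mathbb{C}$-vector spaces therefore says that $\varphi_0\big|_Y$ is generically finite onto a full $r$-dimensional image, so the intersection-theoretic expression above is strictly positive; that is, $\Delta_Y(\varphi_0)>0$.

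Finally, by continuity of $\Delta_Y$ on $\mathrm{Mat}_{r(2k+n)}(\mathbb{R})$, the set
\[
W:=\Bigl\{\varphi\in\mathrm{Mat}_{r(2k+n)}(\mathbb{R}):\Delta_Y(\varphi)>\tfrac{1}{2}\Delta_Y(\varphi_0)\Bigr\}
\]
is an open neighborhood of $\varphi_0$, and setting $C_8:=\tfrac{1}{2}\Delta_Y(\varphi_0)>0$ yields $\Delta_Y(\varphi)\ge C_8$ for every $\varphi\in W$, a fortiori for every $\varphi\in W\cap\mathrm{Mat}_{r(2k+n)}(\mathbb{Q})$.

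The main obstacle is the transfer step in the middle paragraph: rigorously identifying the analytic statement about $d_{z_0}(\varphi_0|_{N'})$ with the algebro-geometric statement that $\varphi_0|_Y$ is generically finite onto its image, so that the polynomial $\Delta_Y$ does not vanish at $\varphi_0$. Once this analytic/algebraic dictionary is in place (using that $\log$ is a local biholomorphism and matches tangent spaces correctly), the rest of the argument is just continuity of a polynomial.
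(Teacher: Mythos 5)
The paper's own proof of this lemma is a single-line citation to Habegger's Lemma 6.2, and your proposal is essentially a reconstruction of the argument behind that citation rather than a genuinely different route: extend $\Delta_Y$ to a continuous function on $\mathrm{Mat}_{r(2k+n)}(\mathbb{R})$, deduce $\Delta_Y(\varphi_0)>0$ from the analytic hypothesis, and finish by openness. That architecture is correct and matches Habegger. However, the gap you flag at the end is real and is not a small dictionary issue; it is the substantive content being cited. For a matrix $\varphi_0$ with irrational real (or, as the statement even allows, complex) entries there is no morphism of algebraic varieties $\varphi_0|_Y$, so ``generically finite onto an $r$-dimensional image'' does not parse and cannot by itself give $\Delta_Y(\varphi_0)>0$. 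Habegger supplies exactly this missing piece in his Lemma 6.1, which gives an analytic (mixed-volume/integral) formula for $\Delta_Y$ that extends it to a continuous, positively homogeneous function of degree $\dim Y$ on all real matrices; from that formula, positivity at $\varphi_0$ is read directly off the nondegeneracy of the pulled-back form on $N'$, i.e., off the hypothesis that $d_{z_0}(\varphi_0|_{N'})$ is an isomorphism. I would also not assert that $\Delta_Y$ is a \emph{polynomial} in the entries of $\varphi$, nor commit to the specific intersection-theoretic expression $\deg_Y\bigl((\varphi^*\omega)^r\bigr)$ without checking degrees (your expression is naturally degree $2r$ in $\varphi$, whereas $\Delta_Y$ is used in the paper as homogeneous of degree $r$); continuity and homogeneity are all the argument actually needs, and they are what Lemma 6.1 of Habegger delivers. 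With that ingredient cited or supplied, the remainder of your argument goes through as written.
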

\begin{proof}
It follows from Lemma 6.2 in \cite{hab}.
\end{proof}

As we mentioned earlier, by Lemma \ref{special 1}, we are always in the second case of Lemma 6.3 in \cite{hab}. That is, the following lemma holds.   
\begin{lemma}\label{lemma 9}
There exists $C_9>0$ and an open neighborhood $W\subset \emph{Mat}^*_{k(2k+n)}(\mathbb{R})$ of $\mathcal{K}^*_{k(2k+n)}$ such that for each 
$\varphi\in W\cap \emph{Mat}^*_{k(2k+n)}(\mathbb{Q})$ there is $\pi\in\prod_{rk}$ with $\Delta_Y(\pi\varphi)\geq C_9$.  
\end{lemma}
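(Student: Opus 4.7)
The plan is to reduce Lemma~\ref{lemma 9} to a pointwise statement via Lemma~\ref{special 1} and Lemma~\ref{lemma 8}, and then upgrade to a uniform neighborhood by a standard compactness argument. Fix an arbitrary $\varphi_0 \in \mathcal{K}^*_{k(2k+n)}$. By definition this lies in $\text{Mat}^*_{k(2k+n)}(\mathbb{R})$ and has rank $k$, so Lemma~\ref{special 1} applies to $\varphi_0$ with the subvariety $Y$: there exist a small open neighborhood $N$ of some smooth point of $Y$ and a point $z_0 \in N'$ (the logarithm of $N$) such that
\begin{equation*}
d_{z_0}(\varphi_0|_{N'}): T_{z_0} N' \longrightarrow \mathbb{C}^k
\end{equation*}
is injective. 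Since $\dim T_{z_0} N' = r$, the image of this map is an $r$-dimensional subspace of $\mathbb{C}^k$, so there exists a coordinate projection $\pi_{\varphi_0} \in \prod_{rk}$ (selecting $r$ of the $k$ output coordinates) such that $\pi_{\varphi_0} \circ d_{z_0}(\varphi_0|_{N'})$ is a linear isomorphism onto $\mathbb{C}^r$. Equivalently, $d_{z_0}((\pi_{\varphi_0}\varphi_0)|_{N'}) : T_{z_0} N' \to \mathbb{C}^r$ is an isomorphism, and $\pi_{\varphi_0}\varphi_0 \in \text{Mat}_{r(2k+n)}(\mathbb{R})$.

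Now I would apply Lemma~\ref{lemma 8} to $\pi_{\varphi_0}\varphi_0 \in \text{Mat}_{r(2k+n)}(\mathbb{R}) \subset \text{Mat}_{r(2k+n)}(\mathbb{C})$. This produces a constant $C_{\varphi_0} > 0$ and an open neighborhood $W_{\varphi_0} \subset \text{Mat}_{r(2k+n)}(\mathbb{R})$ of $\pi_{\varphi_0}\varphi_0$ such that $\Delta_Y(\psi) \geq C_{\varphi_0}$ for every $\psi \in W_{\varphi_0} \cap \text{Mat}_{r(2k+n)}(\mathbb{Q})$. Because the linear map $\varphi \mapsto \pi_{\varphi_0}\varphi$ from $\text{Mat}^*_{k(2k+n)}(\mathbb{R})$ to $\text{Mat}_{r(2k+n)}(\mathbb{R})$ is continuous, its preimage
\begin{equation*}
U_{\varphi_0} := \{\, \varphi \in \text{Mat}^*_{k(2k+n)}(\mathbb{R}) : \pi_{\varphi_0}\varphi \in W_{\varphi_0} \,\}
\end{equation*}
is open in $\text{Mat}^*_{k(2k+n)}(\mathbb{R})$ and contains $\varphi_0$. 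Moreover, for any $\varphi \in U_{\varphi_0} \cap \text{Mat}^*_{k(2k+n)}(\mathbb{Q})$, the matrix $\pi_{\varphi_0}\varphi$ is rational, so it lies in $W_{\varphi_0} \cap \text{Mat}_{r(2k+n)}(\mathbb{Q})$, and therefore $\Delta_Y(\pi_{\varphi_0}\varphi) \geq C_{\varphi_0}$.

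Finally I would use the fact that $\mathcal{K}^*_{k(2k+n)}$ is compact to pass from this pointwise statement to the desired uniform neighborhood: extract a finite subcover $U_{\varphi_0^{(1)}}, \dots, U_{\varphi_0^{(N)}}$ of $\mathcal{K}^*_{k(2k+n)}$ from the open cover $\{U_{\varphi_0}\}_{\varphi_0 \in \mathcal{K}^*_{k(2k+n)}}$, set $W := \bigcup_{i=1}^{N} U_{\varphi_0^{(i)}}$ and $C_9 := \min_{1 \leq i \leq N} C_{\varphi_0^{(i)}} > 0$. Then $W$ is an open neighborhood of $\mathcal{K}^*_{k(2k+n)}$, and for every $\varphi \in W \cap \text{Mat}^*_{k(2k+n)}(\mathbb{Q})$ there is some index $i$ with $\varphi \in U_{\varphi_0^{(i)}}$, and the projection $\pi := \pi_{\varphi_0^{(i)}} \in \prod_{rk}$ satisfies $\Delta_Y(\pi\varphi) \geq C_{\varphi_0^{(i)}} \geq C_9$, as required. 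The main (and only real) conceptual step is the first one—exhibiting the projection $\pi_{\varphi_0}$ from the rank-$r$ image of the differential—and this is precisely where Lemma~\ref{special 1} does the work, circumventing Habegger's first case of the dichotomy in Lemma~6.3 of \cite{hab}; the remaining steps are compactness and continuity, which should be essentially routine.
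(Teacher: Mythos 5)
Your proof is correct and follows essentially the same route as the paper: use Lemma~\ref{special 1} pointwise to get injectivity of the differential, project to $r$ coordinates to obtain an isomorphism, invoke Lemma~\ref{lemma 8}, and close with compactness of $\mathcal{K}^*_{k(2k+n)}$. If anything you are slightly more careful than the published version, which passes directly from Lemma~\ref{special 1} to the phrase ``$d_z(\varphi_0|_{N'})$ is an isomorphism'' without spelling out the intermediate projection step that you make explicit.
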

\begin{proof}
By Lemma \ref{special 1}, for any $\varphi_0\in\mathcal{K}^*_{k(2k+n)}$, there exists $z\in N\rq{}$ and $\pi\in\prod_{rk}$ such that   
$d_z(\varphi_0|_{N\rq{}})$ is an isomorphism of $\mathbb{C}$-vector spaces. By Lemma \ref{lemma 8}, we can find an open neighborhood of $\pi\varphi_0\in \text{Mat}_{r(2k+n)}(\mathbb{R})$ with the stated properties. It follows that we may find $W\rq{}_{\varphi_0}$, an open neighborhood of $\varphi_0$ in $\text{Mat}_{k(2k+n)}(\mathbb{R})$, and $C_{\varphi_0}>0$ with $\Delta_Y(\pi\varphi)\geq C_{\varphi_0}$ for all $\varphi \in W\rq{}_{\varphi_0}\cap\text{Mat}_{k(2k+n)}(\mathbb{Q})$.

Then open cover $\cup_{\varphi_0\in \mathcal{K}} W\rq{}_{\varphi_0}$ contains $\mathcal{K}^*_{k(2k+n)}$. 
Since $\mathcal{K}^*_{k(2k+n)}$ is compact, we may pass to a finite subcover and conclude that there exist $C_9$ and an open subset $W\rq{}$ of $\text{Mat}_{k(2k+n)}(\mathbb{R})$ such that for each $\varphi\in W\rq{}\cap \text{Mat}_{k(2k+n)}(\mathbb{Q})$ there is $\pi \in \prod_{rk}$ with $\Delta_Y(\pi\varphi)\geq C_9$. By taking the restriction $W=W\rq{}\cap \text{Mat}^*_{k(2k+n)}(\mathbb{R})$, we get a desired open set in $\text{Mat}^*_{k(2k+n)}(\mathbb{R})$ having the same properties. 
\end{proof}

In the proof of the next lemma, we simply follow the proof of Lemma 8.1 in \cite{hab}. 
Before proceeding, we remark the following observation which is given in \cite{hab} as well: say $\epsilon$ is an small number satisfying $0<\epsilon \leq \frac{1}{2(2k+n)}$ with 
$p\in \mathcal{C}(Dehn^{[k]},\epsilon)$, so there is $a\in Dehn^{[k]}$ and $b\in G^{2k+n}_m(\mathbb{\overline{Q}})$ with $h(b)\leq\epsilon (1+h(a))$. 
Then $h(a)=h(pb^{-1})\leq  h(p)+h(b^{-1})\leq h(p)+(2k+n)h(b) \leq h(p) +(1+h(a))/2$ by the elementary properties of height. We easily deduce 
\begin{equation}\label{(24)}
h(a)\leq 1+2h(p), \quad h(b)\leq 2\epsilon (1+h(p)).
\end{equation}

\begin{lemma} \label{lemma 11} 
There exists $\epsilon >0$ and $U\subset Y$ which is Zariski open and dense such that 
the height is bounded on $U(\overline{\mathbb{Q}})\cap \mathcal{C}(Dehn^{[k]},\epsilon)$.
\end{lemma}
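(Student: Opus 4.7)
Setup. Fix the neighborhood $W\subset\text{Mat}^*_{k(2k+n)}(\mathbb{R})$ and constant $C_9>0$ afforded by Lemma \ref{lemma 9}, let $Q_0\geq 1$ be the associated constant from Lemma \ref{lemma 4}, and write $r=\dim Y\geq 1$. Real numbers $Q>Q_0$ and $\epsilon\leq 1/(2(2k+n))$ will be chosen at the end. The desired $U\subset Y$ will be obtained as the intersection of the dense Zariski open subsets afforded by Proposition \ref{proposition 1} as the homomorphism $\pi\varphi$ ranges over the finitely many integer matrices of norm at most $(k+1)C'Q$ that can arise below (here $C'$ is a uniform bound on $|\pi|$ for $\pi\in\prod_{rk}$); since $Y$ is irreducible, this finite intersection is again dense and Zariski open.

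Decomposition. Given $p\in U(\overline{\mathbb{Q}})\cap\mathcal{C}(Dehn^{[k]},\epsilon)$, the observation preceding \eqref{(24)} yields $p=ab$ with $a\in Dehn^{[k]}$, $h(a)\leq 1+2h(p)$, and $h(b)\leq 2\epsilon(1+h(p))$. Applying Lemma \ref{lemma 6} to $a\in Dehn^{[k]}$ produces $q\in\mathbb{Z}\cap[1,Q]$ and $\varphi\in\text{Mat}^*_{k(2k+n)}(\mathbb{Z})$ with $\varphi/q\in W$, $|\varphi|\leq(k+1)q$, and $h(\varphi(a))\leq\frac{k(2k+n)}{Q^{1/(k(2k+n))}}h(a)$. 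Applying Lemma \ref{lemma 9} to $\varphi/q\in W\cap\text{Mat}^*_{k(2k+n)}(\mathbb{Q})$ yields $\pi\in\prod_{rk}$ with $\Delta_Y(\pi\varphi/q)\geq C_9$, and by the degree-$r$ homogeneity of $\Delta_Y$ this gives $\Delta_Y(\pi\varphi)\geq C_9 q^r$.

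Matching bounds on $h(\pi\varphi(p))$. On the one hand, Proposition \ref{proposition 1} applied to $\pi\varphi:G^{2k+n}_m\to G^r_m$ and $p\in U(\overline{\mathbb{Q}})$ gives
\begin{equation*}
h(\pi\varphi(p))\geq \frac{r\,\Delta_Y(\pi\varphi)}{2C_1|\pi\varphi|^{r-1}}h(p)-C_7\geq C_{10}\,q\,h(p)-C_7,
\end{equation*}
where we used $|\pi\varphi|\leq C''q$ (since $|\pi|$ is uniformly bounded on $\prod_{rk}$ and $|\varphi|\leq(k+1)q$) together with $\Delta_Y(\pi\varphi)\geq C_9 q^r$, so that $q^r/q^{r-1}=q$. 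On the other hand, using $p=ab$ together with Lemma \ref{lemma 5} applied to $\pi:G^k_m\to G^r_m$ (on $\varphi(a)$) and to $\pi\varphi:G^{2k+n}_m\to G^r_m$ (on $b$),
\begin{equation*}
h(\pi\varphi(p))\leq h(\pi\varphi(a))+h(\pi\varphi(b))\leq \frac{C_3(1+2h(p))}{Q^{1/(k(2k+n))}}+C_4\,q\,\epsilon(1+h(p)).
\end{equation*}

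Conclusion and obstacle. Combining, the coefficient of $h(p)$ on the left-hand side of the resulting inequality becomes $q(C_{10}-C_4\epsilon)-2C_3/Q^{1/(k(2k+n))}$. First fix $Q>Q_0$ large enough that $2C_3/Q^{1/(k(2k+n))}<C_{10}/4$, then choose $\epsilon<\min\{C_{10}/(2C_4),\,1/(2(2k+n))\}$; since $q\geq 1$, this coefficient is at least $C_{10}/4>0$, yielding the desired uniform bound on $h(p)$. The chief difficulty is the matching of $q$-scalings: a naive use of Lemma \ref{lemma 9} that only records $\Delta_Y(\pi\varphi)\geq C_9$ leaves a lower bound of order $q^{-(r-1)}h(p)$, which for $r\geq 2$ cannot dominate $C_4 q\epsilon h(p)$ in the upper bound. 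The homogeneity of $\Delta_Y$ applied to the rescaled matrix $\varphi/q\in W$ supplies the extra factor $q^r$ that balances the estimates, after which the only remaining care is the order of quantifiers $W\rightsquigarrow Q\rightsquigarrow\epsilon$ and the assembly of $U$ as a finite intersection over the integer matrices $\pi\varphi$ of bounded norm.
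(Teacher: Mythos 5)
Your proof is correct and follows essentially the same approach as the paper's: set up $W$ and $C_9$ from Lemma \ref{lemma 9}, apply Lemma \ref{lemma 6} to the $Dehn^{[k]}$-component $a$ of the decomposition $p=ab$, use the homogeneity of $\Delta_Y$ to upgrade $\Delta_Y(\pi\varphi/q)\geq C_9$ to $\Delta_Y(\pi\varphi)\geq C_9 q^r$, and play the lower bound from Proposition \ref{proposition 1} against the upper bound from Lemma \ref{lemma 5}, taking $U$ as a finite intersection over the bounded stock of integer matrices. The only cosmetic differences are that you carry the factor of $q$ explicitly through the lower bound (the paper drops to $|\varphi'|\geq 1$ and absorbs it into a $q$-independent constant $C_{12}$, which slightly streamlines the final bookkeeping), and that you interpose a constant $C''$ bounding $|\pi|$; since $\pi\in\prod_{rk}$ is a coordinate projection, one in fact has $|\pi\varphi|\leq|\varphi|$ and $h(\pi\varphi(p))\leq h(\varphi(p))$ directly, which the paper uses to avoid any re-application of Lemma \ref{lemma 5} to $\pi$. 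Neither variation affects correctness.
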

\begin{proof}
By Lemma \ref{lemma 9}, there exists an open set $W\subset \text{Mat}^*_{k(2k+n)}(\mathbb{R})$ containing $\mathcal{K}^*_{k(2k+n)}$ and $C_{10}>0$ such that, for each 
$\varphi\in W\cap \text{Mat}^*_{k(2k+n)}(\mathbb{Q})$, there is $\pi\in \prod_{\text{dim}Y,k}$ with $\Delta_Y(\pi\varphi)\geq C_{10}$. 

Suppose that $Q_0$ is as in Lemma \ref{lemma 6} and $Q>Q_0$ is a fixed parameter which depends only on $X$ and $Y$. We will see later how to choose $Q$ properly. 

Let $\Theta$ denote the set of all matrices $\varphi\in \text{Mat}^*_{k(2k+n)}(\mathbb{Z})$ such that there exists an integer $q$ with $1\leq q\leq Q, \varphi/q\in W$, 
and $|\varphi|\leq (k+1)q$ (cf. Lemma \ref{lemma 6}). Clearly, $\Theta$ is a finite set. 

For each $\varphi\in \Theta$, there is a $\pi\in \prod_{\text{dim}Y,k}$ satisfying $\Delta_Y(\varphi'/q)\geq C_{10}$ with $\varphi'=\pi\varphi$. 
In particular, $\varphi'\neq 0$ since $C_{10}>0$. By homogeneity we have 
\begin{equation*}
\Delta_Y(\varphi')=q^{\text{dim}Y}\Delta_Y(\varphi'/q)\geq C_{10}q^{\text{dim}Y}.
\end{equation*} 
Now $\varphi'\neq 0$ implies $|\varphi'|\geq 1$, so we obtain the following lower bound for the factor in front of $h(p)$ in \eqref{(13)}
\begin{equation}\label{(111)}
C_{11}|\varphi'|\frac{\Delta_Y(\varphi')}{|\varphi'|^{\text{dim}Y}}\geq C_{10}C_{11}|\varphi'|\frac{q^{\text{dim}Y}}{ |\varphi'|^{\text{dim}Y}}\geq C_{10}C_{11}\frac{q^{\text{dim}Y}}{|\varphi'|^{\text{dim}Y}}
\end{equation}
where
\begin{equation*}
C_{11}=\frac{\text{dim}Y}{2(4(2k+n))^{\text{dim}Y}\text{deg}(Y)}>0.
\end{equation*}
Since $|\varphi'|\leq|\varphi|\leq(k+1)q$, \eqref{(111)} implies 
\begin{equation*}
C_{11}|\varphi'|\frac{\Delta_Y(\varphi')}{|\varphi'|^{\text{dim}Y}}\geq \frac{C_{10}C_{11}}{(k+1)^{\text{dim}Y}}.
\end{equation*} 
We denote this last quantity by $C_{12}$; it is positive and independent of $Q$ and $\varphi$.

Fix $Q$ and $\epsilon$ as below: 
\begin{equation}\label{(25)}
\begin{split}
&Q=\text{max}\left\{Q_0+1,(8k(2k+n)C_{12}^{-1})^{k(2k+n)}\right\}>Q_0,\\
&\epsilon=\text{min}\left\{\frac{1}{2(2k+n)}, \frac{\sqrt{k(2k+n)}}{k+1}\frac{1}{Q^{1+1/(k(2k+n))}}\right\}\in \left(0,\frac{1}{2(2k+n)}\right].
\end{split}
\end{equation}
Let $U_{\varphi}$ be the dense Zariski open subset of $Y$ supplied by Proposition \ref{proposition 1} applied to $\varphi$. Then the intersection
\begin{equation*}
U=\bigcap_{\varphi\in\Theta}U_{\varphi}
\end{equation*}
is a dense Zariski open subset of $Y$ (as $\Theta$ is a finite set) and 
\begin{equation}\label{(26)}
h(\varphi'(p))\geq C_{12}h(p)-C(Q)
\end{equation}
for all $p\in U(\overline{\mathbb{Q}})$ and all $\varphi\in \Theta$; here $C(Q)$ depends neither on $p$ nor on $\varphi$ (but possibly on $Q$). 

Now we assume $p\in U(\overline{\mathbb{Q}})\cap \mathcal{C}(Dehn^{[k]},\epsilon)$. 
That is, there are $a\in Dehn^{[k]}$ and $b\in G^{2k+n}_m(\overline{\mathbb{Q}})$ with $p=ab$ and $h(b)\leq \epsilon (1+h(a))$. 
Then, by Lemma \ref{lemma 6}, there exists $\varphi \in \Theta$ with 
\begin{equation*}
h(\varphi(a))\leq k(2k+n)Q^{-1/(k(2k+n))}h(a)
\end{equation*}
and so
\begin{equation}\label{(27)}
h(\varphi(a))\leq 2k(2k+n) Q^{-1/(k(2k+n))}(1+h(p))
\end{equation}
by \eqref{(24)}. 

By Lemma \ref{lemma 5}, we get 
\begin{equation*}
h(\varphi(b))\leq \sqrt{k(2k+n)}|\varphi|h(b), 
\end{equation*}
and 
\begin{equation*} 
h(\varphi(b))\leq 2\epsilon\sqrt{k(2k+n)}|\varphi|(1+h(p))
\end{equation*} 
by \eqref{(24)}. But $|\varphi|\leq(k+1)q\leq(k+1)Q$, thus 
\begin{equation}\label{(28)}
h(\varphi(b))\leq 2\epsilon\sqrt{k(2k+n)}(k+1)Q(1+h(p)).
\end{equation}
Using \eqref{(27)}, \eqref{(28)}, and elementary properties of height give
\begin{align*}
h(\varphi(p))&=h(\varphi(ab))\leq h(\varphi(a))+h(\varphi(b))\\ 
             &\leq \big(2k(2k+n)Q^{-1/(k(2k+n))}+2\epsilon\sqrt{k(2k+n)}(k+1)Q\big)(1+h(p)).
\end{align*}
The choice of $\epsilon$ made   in \eqref{(25)} implies 
\begin{equation*}
h(\varphi (p)) \leq 4k(2k+n) Q^{-1/(k(2k+n))}(1+h(p)),
\end{equation*} 
and the choice of $Q$ gives 
\begin{equation*}
h(\varphi(p))\leq C_{12}(1+h(p))/2.
\end{equation*} 
Furthermore, we have $h(\varphi'(p))\leq h(\varphi(p))$, hence
\begin{equation}\label{(29)}
h(\varphi'(p))\leq \frac{C_{12}}{2}(1+h(p)).
\end{equation}
Comparing \eqref{(26)} and \eqref{(29)}, we immediately get the desired result $h(p)\leq 1+2C_{12}^{-1}C(Q)$.
\end{proof}
Let $\Sigma$ be a small open neighborhood (in the sense of classical topology) of $c$ in $X$. Applying Lemma \ref{lemma 11} with $X=Y$ shows that there exists a nonempty Zariski open subset $U\subset X$ such that $U(\overline{\mathbb{Q}})\cap Dehn^{[k]}$ has bounded height. 
If $\Sigma$ is contained in $U$, then we are done. Otherwise, the following simple descent argument shows how to deal with the points in $(\Sigma \backslash (\Sigma \cap U(\overline{\mathbb{Q}})))\cap Dehn^{[k]}$: 
\begin{lemma}\label{lemma 12}
Suppose that there is a proper subset $S \subsetneq \Sigma$ and an $\epsilon>0$ with the following properties:\\
\\
(i) $S$ intersects with any small neighborhood (in the sense of classical topology) of $c$ but contains none of them.\\
(ii) The height is bounded from above on $S\cap \mathcal{C}(Dehn^{[k]},\epsilon)$.\\
\\
Then there exists a subset $S'\subset \Sigma$ containing $S$ with $\overline{\Sigma\backslash S'}\subsetneq\overline{\Sigma\backslash S}$ and an $\epsilon\rq{}>0$ such that the height is bounded from above on $S'\cap \mathcal{C}(Dehn^{[k]},\epsilon\rq{})$.
\end{lemma}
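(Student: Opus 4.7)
The plan is to apply Lemma \ref{lemma 11} not to $X$ itself but to the irreducible components of $Y := \overline{\Sigma \setminus S}^{\mathrm{Zar}}$, the Zariski closure in $X$, and to enlarge $S$ by the resulting Zariski-open dense subsets of those components. Write $Y = Y_1 \cup \cdots \cup Y_m$ for the irreducible decomposition. Hypothesis (i) forces $c$ into the classical closure of $\Sigma \setminus S$, and hence into $Y$; by shrinking $\Sigma$ at the outset if necessary, I may assume $\Sigma$ is disjoint from every irreducible component of $Y$ not through $c$, since each such component is closed in $X$ and its complement is a classical neighborhood of $c$. Each $Y_i$ that meets $\Sigma$ is then an irreducible subvariety of $X$ passing through $c$, to which Lemma \ref{lemma 11} applies, producing a Zariski-open dense $U_i \subset Y_i$ and an $\epsilon_i > 0$ with the height bounded on $U_i(\overline{\mathbb{Q}}) \cap \mathcal{C}(Dehn^{[k]}, \epsilon_i)$. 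I then set
\begin{equation*}
S' := S \cup \bigcup_{i=1}^m (U_i \cap \Sigma), \qquad \epsilon' := \min\{\epsilon, \epsilon_1, \ldots, \epsilon_m\}.
\end{equation*}

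The height bound on $S' \cap \mathcal{C}(Dehn^{[k]}, \epsilon')$ is then immediate by cases: a point in $S$ is handled by (ii) applied with the smaller $\epsilon' \leq \epsilon$, and a point in $U_i \cap \Sigma$ is handled by Lemma \ref{lemma 11} applied to $Y_i$ with $\epsilon' \leq \epsilon_i$. For the strict inclusion $\overline{\Sigma \setminus S'} \subsetneq \overline{\Sigma \setminus S}$, I would observe that $\Sigma \setminus S' \subseteq \bigcup_i (Y_i \setminus U_i)$, which is a proper Zariski closed subset of $Y = \bigcup_i Y_i$: a generic point of any $U_i$ lies in $Y_i \subseteq Y$ but avoids both $Y_i \setminus U_i$ and every $Y_j$ with $j \neq i$ by the irreducibility of the $Y_i$, hence avoids $\bigcup_j (Y_j \setminus U_j)$. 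In particular the Zariski closure of $\Sigma \setminus S'$ drops in dimension, which is what will make the descent in Theorem \ref{BHC3} terminate after finitely many iterations.

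The main obstacle I anticipate is arranging that every component $Y_i$ of $\overline{\Sigma \setminus S}^{\mathrm{Zar}}$ meeting $\Sigma$ actually contains $c$: this requires shrinking $\Sigma$, and is only legitimate because the overall descent in Theorem \ref{BHC3} proceeds in finitely many steps (Zariski closures dropping strictly in dimension), so only finitely many such shrinkings of $\Sigma$ are ever needed in total. A closely related subtlety is that $S'$ must be enlarged by the entire classical-open piece $U_i \cap \Sigma$ rather than merely its algebraic-point slice $U_i(\overline{\mathbb{Q}}) \cap \Sigma$, so that removing $S'$ actually shrinks the Zariski closure of the complement instead of leaving it intact; this is necessary because algebraic points are classically dense in any open set of $X$.
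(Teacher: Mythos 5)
Your overall strategy — take the Zariski closure of $\Sigma\setminus S$, apply Lemma \ref{lemma 11} to irreducible pieces through $c$, enlarge $S$ by the resulting dense open subsets, and observe that the Zariski closure of the complement strictly drops — is the same one the paper uses. But there is a real gap in your execution, and it is precisely the one you flag yourself: you shrink $\Sigma$. The lemma as stated fixes $\Sigma$ once and for all and asks for $S'\subset\Sigma$ with $\overline{\Sigma\setminus S'}\subsetneq\overline{\Sigma\setminus S}$, with the same $\Sigma$ on both sides; your proof delivers a conclusion about a smaller $\Sigma'$ instead, which is a different statement. You try to excuse this by appealing to the finiteness of the descent in Theorem \ref{BHC3}, but that is circular: the descent terminates because this lemma gives a strict inclusion of Zariski closed sets inside the fixed noetherian space $X$, so you cannot invoke termination in order to license the shrinking that you need to prove the lemma. (Your parenthetical ``Zariski closures dropping strictly in dimension'' also overstates the situation: it is a strict inclusion, not necessarily a dimension drop, and DCC is what is really used.)

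The fix is simpler than what you attempt, and it reveals that your concern about components of $\overline{\Sigma\setminus S}$ missing $c$ is a non-issue. The paper does not decompose $\overline{\Sigma\setminus S}$ into all its irreducible components and handle them simultaneously. It picks a single irreducible component $Y$ containing $c$ (there must be one, since $c$ lies in the Zariski closure by hypothesis (i)), writes $\overline{\Sigma\setminus S}=Y\cup Z$ with $Y\not\subset Z$, applies Lemma \ref{lemma 11} only to $Y$ to get a dense Zariski-open $U\subset Y$ and $\epsilon'$, and sets $S'=S\cup(U\cap\Sigma)$, $\epsilon'':=\min\{\epsilon,\epsilon'\}$. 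Then $\Sigma\setminus S'\subset(Y\setminus U)\cup Z$, which is Zariski closed and does not contain $Y$ (because $Y\not\subset Z$ and $U\neq\emptyset$), so $\overline{\Sigma\setminus S'}\subsetneq Y\cup Z=\overline{\Sigma\setminus S}$. The components not through $c$ simply sit inside $Z$ and are carried along untouched; they do not obstruct the strict decrease, and they will be whittled away (or rendered irrelevant) in later iterations of the descent once DCC forces the chain to terminate with $S_l$ containing a neighborhood of $c$. So you should delete the shrinking of $\Sigma$ entirely and replace the union over all components by the choice of one component through $c$. One small point in your favor: you are right to take $U\cap\Sigma$ rather than $U(\overline{\mathbb{Q}})\cap\Sigma$ when forming $S'$ — the transcendental points of $U$ are classically dense and would otherwise keep the Zariski closure of the complement at full size — and the paper's own proof is slightly imprecise on that line, though it does not affect the descent since $\mathcal{C}(Dehn^{[k]},\epsilon)$ consists of algebraic points in any case.
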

\begin{proof}
By the assumption on $S$ and the fact that $\Sigma\backslash S$ is non-empty, it follows that its Zariski closure $\overline{\Sigma \backslash S}$ contains $c$. 
Let $\overline{\Sigma \backslash S}=Y \cup Z$ where $Y$ an irreducible variety containing $c$, $Z$ Zariski closed and $Y \not\subset Z$. If $Y$ has positive dimension, then we may apply Lemma \ref{lemma 11} and find a dense Zariski open $U\subset Y$ and $\epsilon\rq{}>0$ such that the height is bounded from above on $U(\overline{\mathbb{Q}})\cap \mathcal{C}(Dehn^{[k]}$,$\epsilon\rq{})$. If $Y$ is a point (meaning that $Y=c$), then the existence of $U$ and $\epsilon\rq{}$ is obvious. 

Clearly, we may assume $\epsilon\rq{}\leq \epsilon$. We set $S'=S\cup(U(\overline{\mathbb{Q}})\cap \Sigma)$. By the hypothesis and previous paragraph, the height is bounded from above on $S'\cap \mathcal{C}(Dehn^{[k]},\epsilon\rq{})$. 
Of course $\Sigma \backslash S'\subset (Y \backslash U)\cup Z$ and even $\overline{\Sigma\backslash S'}\subset (Y\backslash U)\cup Z$. 
So $\overline{\Sigma\backslash S'}=\overline{\Sigma\backslash S}$ is impossible and the lemma follows.  
\end{proof}

\begin{proof}[Proof of Theorem \ref{BHC1}.]  
Let $U$ and $\epsilon>0$ be the ones obtained from Lemma \ref{lemma 11} by applying $X=Y$ to it. As we remarked earlier, if $\Sigma \subset U$, then the theorem follows. So we suppose $\Sigma \not\subset U$, and set $S_0=U\cap \Sigma$ and $\epsilon_0=\epsilon$. By induction, let\rq{}s assume $S_{l-1}$ ($l\geq 1$) be a subset of $\Sigma$ and $\epsilon_{l-1}>0$ such that $S_{l-1}\cap \mathcal{C}(Dehn^{[k]},\epsilon_{l-1})$ has bounded height. 
If $S_{l-1}$ contains any open neighborhood of $c$, then we are done. Otherwise, we apply Lemma \ref{lemma 12} and get a subset $S_l\subset \Sigma$ and an $\epsilon_l>0$ such that the height is bounded on $S_l\cap\mathcal{C}(Dehn^{[k]},\epsilon_l)$. 

Now we obtain a chain 
\begin{equation*}
X\supset \overline{\Sigma\backslash S_0}\supsetneq \overline{\Sigma\backslash S_1}\supsetneq \cdots \supsetneq\overline{\Sigma\backslash S_l}
\end{equation*} 
But $X$, being a noetherian topological space, satisfies the descending chain condition of Zariski closed sets. In our case, this means $S_l=\Sigma$ for some $l$. This completes the proof.  
\end{proof}
\section{Acknowledgement}
I would like to thank Walter Neumann and Nathan Dunfield for their support, interest and encouragement in this project throughout the years. 
I also thank Jennifer Hom for her help in correcting the English in this paper. 

\vspace{10 mm}

Department of Mathematics\

Columbia University\

2990 Broadway, New York, NY 10027\\

\emph{Email Address}: jeon@math.columbia.edu

\end{document}